\def\C{\mathbb{C}}
\def\N{\mathbb{N}}
\def\Z{\mathbb{Z}}
\newcommand{\op}{\oplus}
\newcommand{\mf}{\mathfrak}
\newcommand{\dis}{\displaystyle}
\newcommand{\bgop}{\bigoplus}
\newcommand{\ot}{\otimes}
\newcommand{\al}{\alpha}
\newcommand{\mcal}{\mathcal}
\newcommand{\be}{\beta}
\newcommand{\wtil}{\widetilde}
\newcommand{\ov}{\overline}
\newcommand{\la}{\lambda}
\newcommand{\what}{\widehat}
\newcommand{\bs}{\boldsymbol}
\newtheorem{theo}{Theorem}[section]
\newtheorem{lemm}[theo]{Lemma}
\newtheorem{remark}[theo]{Remark}
\newtheorem{coro}[theo]{Corollary}
\newtheorem{prop}[theo]{Proposition}
\numberwithin{equation}{section}
\begin{document}
\title{Representations of affine Nappi-Witten Lie algebras over polynomial algebras}

\author[P. Chakraborty]{Priyanshu Chakraborty}
\address{Priyanshu Chakraborty:School of Mathematical Sciences, Ministry of Education Key Laboratory of Mathematics and Engineering Applications and Shanghai Key Laboratory of PMMP,
		East China Normal University, No. 500 Dongchuan Rd., Shanghai 200241, China. }
	\address{Priyanshu Chakraborty:	Indian Institute of Science, Bangalore, India}

\email{priyanshuc@iisc.ac.in, priyanshuc437@gmail.com}
 \author[S. Tantubay ]{Santanu Tantubay }
\address{S. Tantubay: Shenzhen International Center for Mathematics, Southern University of Science and Technology, Shenzhen, China  }
\email{santanu@sustech.edu.cn , 1mathsantanu@gmail.com}


\subjclass[2020]{17B68; 17B67}

\keywords{Affine Kac-Moody Algebra, Nappi-Witten Lie algebra, Affine Nappi-Witten Lie algebra, Non-weight modules, Virasoro Lie algebra}
\date{}

\maketitle
\begin{abstract}
In this paper, we study the representation theory of affine Nappi-Witten Lie algebra $\widehat{H_4}$ corresponding to the Nappi-Witten Lie algebra $H_4$. We completely classify  Cartan-free modules of rank one for the Nappi-Witten Lie algebra $H_4$. With the help of Cartan free $H_4$ modules we classify Cartan-free modules of rank one over affine Nappi Witten Lie algebras. We also give a necessary and sufficient condition for these modules to be irreducible. Finally as an application we classify Cartan  free modules of rank one for affine-Virasoro Nappi-Witten Lie algebras.
\end{abstract}

\section{Introduction}
 There are numerous uses for two-dimensional conformal field theories (CFTs) in mathematics and physics. A significant category of CFTs include Wess-Zumino-Novikov-Witten (WZNW) models, studied in \cite{EW}. Nappi and Witten showed in \cite{NW} that a WZNW model is based on a central extension of the two-dimensional Euclidean group that describes the homogeneous four-dimensional space-time corresponding to a gravitational plane wave. The corresponding Lie algebra $H_4$ is known as Nappi Witten Lie algebra. Nappi Witten Lie algebra can be realized as a Lie algebra of the complex four dimensional Lie group $G(0,1)$ which consists of all $4 \times 4$ matrices of the form
  $$  \begin{pmatrix}
1 & ce^{\tau} & a & \tau \\
0 & e^{\tau} & b & 0 \\ 
0 & 0 & 1  & 0\\
0 & 0 & 0& 1 \\
\end{pmatrix} $$
 with $a,b,c, \tau \in \C$.
 This Lie algebra is neither abelian nor semisimple. Nappi-Witten Lie algebra has non-degenerate symmetric bilinear form which is $H_4$ invariant. Therefore with a method similar to affine Kac-Moody Lie algebra, one can affinize Nappi-Witten Lie algebra. The central extension of loop of $H_4$ is known as affine Nappi-Witten Lie algebra. Moreover, the natural action of Virasoro algebra on $\C[t,t^{-1}]$ induces afiine-Virasoro Nappi-Witten Lie algebra structure (see 2.4). Our aim to investigate a class of irreducible representations for Nappi-Witten and affine Nappi-Witten Lie algebras arising from Cartan free modules.\\
 The study of the representation theory of the
 Nappi-witten Lie algebra $H_4$ was started in \cite{MW}. Later a systematic study of representations of $\what H_4$ was initiated in \cite{BJP}, where the authors studied the structures of generalized Verma modules and gave vertex
 operator algebra constructions. In \cite{JW}, the authors classified the simple restricted modules for the affine Nappi-Witten algebra. The category of weight modules with
 finite dimensional weight spaces over the Nappi-Witten vertex operator algebra was studied in \cite{AKD},
 in particular the simple modules were classified and the characters of these simple modules were computed. The imaginary Verma modules and irreducible weight modules with finite weight spaces for the twisted affine Nappi-witten Lie algebra were studied in \cite{XC} and \cite{XC2} respectively. \\
Let $\mathfrak{a}$ be any Lie algebra over $\C$ and $\mathfrak{b}$ be any finite-dimensional abelian subalgebra of $\mathfrak{a}$. Denote $\mathcal{M}(\mathfrak{a},\mathfrak{b})$ by the full subcategory of $U(\mathfrak{a})$-modules consisting of objects whose restriction to $U(\mathfrak{b})$ is a free module of rank $1$, i.e.,
$$\mathcal{M}(\mathfrak{a}, \mathfrak{b})=\{M \in U(\mathfrak{a})-\hbox{Mod}| \hbox{Res}_{U(\mathfrak{b})}^{U(\mathfrak{a})}M \cong_{U(\mathfrak{b})}  U(\mathfrak{b})\},$$
here $U(\mf a), U(\mf b)$ denote the universal enveloping algebra of $\mf a, \mf b$ respectively.
 We call these modules as Cartan free modules with Cartan $\mf b$. In general, Cartan free modules induces non-weight modules.

In recent days, non-weight simple modules for Lie algebras are studied by several authors, for instance see \cite{N1,N2,CH,JZ,PC,XZ,CTZ2,TC}. A class of non-weight modules for finite-dimensional simple Lie algebra of type $A_l$ is studied by J. Nilsson in \cite{N1}, which are known as Cartan-free modules. In \cite{N2}, the author showed that Cartan-free modules exist only for type $A_l(l\geq 1)$ and type $C_l(l\geq 2)$.   For Witt algebra such classification was done in \cite{TZ}. It should be mentioned that in \cite{CTZ1} authors studied Cartan free module affine Kac-Moody Lie algebras, which encourage us to study this type of modules for affine Nappi-Witten Lie algebras. Recently non-weight modules for affine-Virasoro Nappi-Witten Lie algebras has been studied in \cite{CX}.
In this paper, we will study Cartan free modules for Nappi-Witten, affine Nappi-Witten, affine-Virasoro Nappi-Witten Lie algebras. Our results recover result of \cite{CX}.\\

The paper is organised as follows. In Section \ref{NP}, we recall definitions of Nappi-Witten Lie algebra, affine Nappi-Witten Lie algebra as well as affine-Virasoro Nappi-Witten Lie algebra. In Section \ref{NW}, we consider the Nappi-Witten Lie algebra, defined as $H_4=\C p\oplus \C q\oplus \C r\oplus \C s$ with brackets given by 2.1. We classify all possible Cartan-free modules for $H_4$ with Cartan $\mf h=\C s$. 
\begin{theo}
(1).  $ \mathcal{M}(H_4,\mathfrak{h})=\{M_{(g,0)},M_{(0,g)},M_{(h,b)},M_{(b,h)},M_{(a,b)}, M_0: g,h \in \C[s],\;a, b\in \C^*,\; \text{deg}(h)=1\}$, actions of $H_4$ on the modules of $\mathcal{M}(H_4,\mathfrak{h})$ are given by (3.1)-(3.6).\\
(2). $M_{(g,0)}$ and $M_{(0,g)}$ are irreducible iff $g$ is a non-zero constant polynomial.\\
(3). $M_{(h,b)}$, $M_{(b,h)}$ and $M_{(a,b)}$ are irreducible.
\end{theo}
 In Section \ref{ANW}, we consider the affine Nappi-Witten Lie algebra $\widehat{H_4}=H_4\otimes  \C[t^{\pm1}]\oplus \C K \oplus \C d$ with brackets given by 2.2. Let $\what {\mf h}= \C s \op \C d$ be the Cartan subalgebra of $\widehat{H_4}$. For $\alpha\in \C^*$, a sequence of complex numbers $\bs{\beta}=\{\beta_i:i\in \Z,\; \beta_0=0\}$ and a $M \in \mcal M(H_4, \mf h)$ define $\what{M}(\alpha, \bs \beta)=M\otimes \C[d]$ on which action of $\widehat{H_4}$ is given by (4.1).\\
 Also for a sequence of functions $\mathbf{f}=\{f_k(s): f_k(s)\in \C[s],\; f_0(s)=s , \, k \in \Z\}$, define a $\what H_4$-module structure on $\what{M}(\mathbf{f})=\C[s,d]$ on which action of $\widehat{H_4}$ is given by (4.2).\\
 Let us denote these modules for $\what {H_4}$ as $\what M_{(g,0,\al,\bs\be)}$, $\what M_{(0,g,\al,\bs\be)}$, $\what M_{(h,b,\al,\bs\be)}$, $\what M_{(b,h,\al,\bs\be)}$ and $\what M_{(a,b,\al,\bs\be)}$ when corresponding Cartan free rank one $H_4$ modules are $ M_{(g,0)}$, $ M_{(0,g)}$, $ M_{(h,b)}$, $ M_{(b,h)}$ and $M_{(a,b)}$ respectively. Then we have the following theorem.
 \begin{theo}
   (1).  $\mathcal{M}(\what{H_4},\what {\mathfrak{h}})=\{\what M_{(g,0,\al,\bs\be)},\what M_{(0,g,\al,\bs\be)},\what M_{(h,b,\al,\bs\be)} , \what M_{(b,h,\al,\bs\be)}, \what M_{(a,b,\al,\bs\be)}, \what{M}(\mathbf{f}) : g,h \in \C[s],\;a, b\in \C^*,\; \text{deg}(h)=1,\,  {\bf f}=(f_k(s))_{k \in \Z}, \, f_0(s)=s\}$, actions of $\what{H_4}$ on the modules of $\mathcal{M}(\what{H_4},\what {\mathfrak{h}})$ are given by (4.1) and (4.2).  \\
   (2). 
$\what M_{(h,b,\al,\bs\be)}$, $\what M_{(b,h,\al,\bs\be)}$ and $\what M_{(a,b,\al,\bs\be)}$ are irreducible $\what H_4$ module.\\
(3). $\what H_4$-modules $\what M_{(g,0,\al,\bs\be)}$ and $\what M_{(0,g,\al,\bs\be)}$ are irreducible iff $g$ is non-zero constant. \\
(4). $\what{M}(\mathbf{f})$ is reducible. Furthermore, if $f_k(s) \neq 0$ for some $k \neq 0$, then there is a chain of submodules:
$$ <1> \supset <s> \supset <s^2> \supset ....   $$
such that $\frac{<s^m>}{<s^{m+1}>}$ is irreducible for all $m \geq 0$, where $<f(s)>$ denote the ideal generated by $f(s)$ in $\C[s,d].$
 \end{theo}
  Finally as an application of our results we recover the Cartan free modules for affine-Virasoro Nappi-Witten Lie algebras which was obtained in \cite{CX}.

\section{Notation and Preliminaries}\label{NP}
Throughout this paper, $\Z$, $\C$, and $\C^*$ denote the sets of integers, complex numbers, and nonzero complex numbers, respectively.  For a Lie algebra $\mathfrak{a}$, we denote the universal enveloping algebra of $\mathfrak{a}$ as ${U}(\mathfrak{a})$. All the vector spaces, algebras, and tensor products are over $\C$, unless it is specified.

\subsection{Nappi-Witten Lie algebras}
 The Nappi-Witten Lie algebra $H_4$ is a four-dimensional vector space
 $$H_4=\C p\oplus \C q\oplus \C r\oplus \C s$$
together with the Lie brackets
$$[p,q]=r,\ [s,p]=p,\; [s,q]=-q, [r,.]=0. $$
Let $\mathfrak{h}=\C s$ be a Cartan subalgebra of $H_4$.  In the next section, we will describe a class of modules for $H_4$ with respect to this Cartan subalgebra.\\
Let $(,)$ be a symmetric bilinear form on $H_4$ defined by 
$$(p,q)=1,\; (r,s)=1,\; \text{otherwise}, (\;,\;)=0.$$
It is easy to see that $(\;, \;)$ is a non-degenerate $H_4$-invariant symmetric bilinear form on $H_4$.

\subsection{Affine Nappi-Witten Lie algebras.}\label{Aff}
 Let $\C[t^{\pm1}]$ be the Laurent polynomial ring over $\C$.  Now we consider the space 
 $$\widehat{H_4}=H_4\otimes  \C[t^{\pm1}]\oplus \C K \oplus \C d$$
 with the Lie brackets 
 $$[h_1\otimes t^m,h_2\otimes t^n]=[h_1,h_2]\otimes t^{m+n}+m(h_1,h_2)\delta_{m+n,0}K,\; [\widehat{H_4}, K]=0, \; [d, h_1\otimes t^m]=mh_1\otimes t^m,$$
 where $h_1,h_2 \in H_4, \; m,n \in \Z$.\\
 Throughout the paper, we will denote the element $h_1\otimes t^n$ by $h_1(n)$.

 \subsection{Virasoro Lie algebras}
By definition, the Virasoro algebra $Vir:=\C\{d_m, K: m\in \Z\}$, with the brackets
$$[d_m,d_n]=(n-m)d_{m+n}+\delta_{m+n,0}\frac{m^3-m}{12}K,\; [d_m, K=0],$$
for all $m,n\in \Z$. 
 
\subsection{Affine-Virasoro Nappi-Witten Lie algebras}
 We define the affine-Virasoro Nappi-Witten Lie algebra as follows. The underlying vector space is given by
 $$\overline{H_4}=H_4\otimes  \C[t^{\pm1}]\oplus \C K \; \dis{\bgop_{m \in \Z}} \C d_m,$$
and the Lie brackets are given by:
$$[h_1\otimes t^m,h_2\otimes t^n]=[h_1,h_2]\otimes t^{m+n}+m(h_1,h_2)\delta_{m+n,0}{K},\; [\overline{H_4}, {K}]=0, \; [d_m, h_1\otimes t^n]=nh_1\otimes t^{m+n},$$
 $$[d_m,d_n]=(n-m)d_{m+n}+\delta_{m+n,0}\frac{m^3-m}{12}{K}.$$

\section{Cartan free modules over Nappi-Witten Lie algebras}\label{NW}
 In this section, we will discuss Cartan free modules of rank one over Nappi-Witten Lie algebras. Recall that we consider a Cartan subalgebra $\mathfrak{h}=\C s$ for $H_4$. Then ${U}(\mathfrak{h})$ is the polynomial algebra $\C[s]$. We define an automorphism $\tau:{U}(\mathfrak{h}) \rightarrow {U}(\mathfrak{h})$ by $\tau (s)=s-1$.
 \begin{lemm}\label{deg}
  Let $x\in \C[s]$ be any non-constant polynomial. Then $\text{deg}(\tau (x)-x)=\text{deg} (x)-1$.   
 \end{lemm}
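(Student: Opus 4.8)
The plan is to reduce everything to an explicit computation of the top-degree coefficient, since $\tau$ is nothing but the substitution $s \mapsto s-1$. Writing $x = \sum_{i=0}^{n} a_i s^i$ with $a_n \neq 0$ and $n = \deg(x) \geq 1$ (this is where the non-constant hypothesis enters), I would record that $\tau(x)$ equals the polynomial $x(s-1) = \sum_{i=0}^{n} a_i (s-1)^i$, so that $\tau(x) - x = \sum_{i=0}^{n} a_i\big((s-1)^i - s^i\big)$. The whole question is then to locate the leading term of this finite difference.

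The key step is a term-by-term estimate via the binomial theorem. For each $i$ one has $(s-1)^i - s^i = -\,i\,s^{i-1} + (\text{terms of degree} \leq i-2)$, so the summand indexed by $i$ contributes degree at most $i-1$. Consequently only the top index $i = n$ can produce degree $n-1$, and it does so with coefficient $-n a_n$, while every lower summand has degree at most $n-2$. This pins the candidate leading coefficient of $\tau(x) - x$ to $-n a_n$ in degree $n-1$.

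Finally, I would observe that $-n a_n \neq 0$: we are working over $\C$ (characteristic zero), $n \geq 1$, and $a_n \neq 0$ by the choice of leading coefficient. Hence there is no cancellation at the top, and $\deg(\tau(x) - x) = n - 1 = \deg(x) - 1$, as claimed.

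I do not expect a genuine obstacle here; the only point requiring a moment's care is the role of the hypothesis that $x$ be non-constant. If $n = 0$ then $\tau(x) - x = 0$ and the degree formula is vacuous (the zero polynomial has no well-defined degree), so the assumption $n \geq 1$ is exactly what makes $-n a_n$ a nonzero scalar and keeps the leading term alive. Everything else is routine expansion.
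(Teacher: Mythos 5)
Your proof is correct and complete; the paper itself dismisses this lemma with ``It is easy to prove,'' and your binomial-expansion argument (leading term $-n a_n s^{n-1}$, nonzero since $\C$ has characteristic zero and $n\geq 1$) is exactly the standard computation the authors had in mind. Your remark on why the non-constant hypothesis is needed is also apt, since for constant $x$ the difference $\tau(x)-x$ is the zero polynomial.
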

 \begin{proof}
     It is easy to prove.
 \end{proof}
Now we define certain class of modules for $H_4$ on the polynomial algebra $\C[s]$. For any $g\in \C[s]$, we define $M_{(g,0)}=\C[s]$ on which action of $H_4$ is given by:
 \begin{equation}
 \begin{cases}
   p.x=\tau (x)g,\\
   q.x=0,\\
   r.x=0,\\
   s.x=sx,
 \end{cases}
\end{equation}
where $x\in \C[s]$.\\

Similarly we define $M_{(0,g)}=\C[s]$ on which action of $H_4$ is given by:
\begin{equation}
 \begin{cases}
   p.x=0,\\
   q.x=\tau^{-1}(x)g,\\
   r.x=0,\\
   s.x=sx,
 \end{cases}
\end{equation}
where $x\in \C[s]$.\\
Now suppose $h(s)=a_1s+a_2$ be a non-zero one-degree polynomial in $\C[s]$ and $b\in \C^*$. We define $M_{(h,b)}=\C[s]$ on which action of $H_4$ is given by:
 \begin{equation}
 \begin{cases}
   p.x=\tau (x) h(s),\\
   q.x=\tau^{-1}(x) b,\\
   r.x= -a_1b,\\
   s.x=sx,
 \end{cases}
\end{equation}
where $x\in \C[s]$.\\
Similarly we define $M_{(b,h)}=\C[s]$ on which action of $H_4$ is given by:
\begin{equation}
 \begin{cases}
   p.x=\tau (x) b,\\
   q.x=\tau^{-1}(x) h(s),\\
   r.x= -a_1b,\\
   s.x=sx,
 \end{cases}
\end{equation}
where $x\in \C[s]$.

Let $a,b$ be two non-zero constant and define $M_{(a,b)}=\C[s]$ on which action of $H_4$ is given by:
 \begin{equation}
 \begin{cases}
   p.x=\tau (x)a,\\
   q.x=\tau^{-1}(x) b,\\
   r.x= 0,\\
   s.x=sx,
 \end{cases}
\end{equation}
where $x\in \C[s]$.\\
Define one another class of module for $H_4$ over the vector space $M_0=\C[s]$ on which action of $H_4$ is given by:
\begin{equation}
 \begin{cases}
   p.x=q.x=r.x=0,\\
    s.x=sx,
 \end{cases}
\end{equation}
where $x\in \C[s]$.\\

It is easy to see that with the above actions $M_{(g,0)}$, $M_{(0,g)}$,  $M_{(h,b)}$, $M_{(b,h)}$, $M_{(a,b)}$ and $M_0$  becomes $H_4$-module, which are Cartan free modules of rank one.\\

Let us define a linear map $\eta:H_4 \to H_4$ by sending
\[
   p \mapsto -q, \hspace{1cm}
   q \mapsto p, \hspace{1cm}
   r \mapsto r, \, \hspace{1cm}
   s \mapsto -s. 
 \]
 It is easy to see that $\eta$ is an automorphism of $H_4$. Now we twist the modules $M_{(g,0)}$ and $M_{(h,b)}$ by the above automorphsim and denote the twisted modules by  $M^t_{(g,0)}$ and $M^t_{(h,b)}$ respectively. Then we have the isomorphism of modules $M^t_{(g(s),0)}\cong M_{(0,g(-s))}$  and $M^t_{(h(s),b)} \cong M_{(-b,h(-s))}$ under the map $f(s) \mapsto f(-s)$ (in both cases).  \\
 
 Now we are going to prove that the above class of modules exhausts all $U(\mf h)$ free modules for $H_4$. The following Lemma easily follows from the Lie brackets of $H_4.$
\begin{lemm}
    Let $M\in \mathcal{M}(H_4,\mathfrak{h})$. Then for any $x\in M$ we have: 
    \begin{equation}
        \begin{cases}
            p.x=\tau (x) p.1,\\
            q.x=\tau^{-1}(x) q.1,\\
            r.x=x r.1,\\
            s.x=sx
        \end{cases}
    \end{equation}
\end{lemm}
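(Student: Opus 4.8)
The plan is to exploit the fact that, since $M \in \mathcal{M}(H_4, \mathfrak{h})$, the restriction of $M$ to $\mathcal{U}(\mathfrak{h}) = \C[s]$ is free of rank one; fixing the generator $1 \in M$, every element of $M$ is uniquely of the form $x = x(s)\cdot 1$ for a polynomial $x(s) \in \C[s]$, and $s$ acts by multiplication. Thus it suffices to express each of $p\cdot x$, $q\cdot x$, $r\cdot x$ in terms of the single vectors $p\cdot 1$, $q\cdot 1$, $r\cdot 1$, and the crux is to commute the generators $p,q,r$ past an arbitrary polynomial in $s$.

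First I would record the operator identities coming from the brackets $[s,p]=p$, $[s,q]=-q$ and $[r,\cdot]=0$. On $M$ these read $sp = p(s+1)$, $sq = q(s-1)$ and $sr = rs$. An immediate induction on $n$ from these degree-one relations gives $s^n p = p(s+1)^n$, $s^n q = q(s-1)^n$ and $s^n r = r s^n$; by linearity they extend to $f(s)\,p = p\,f(s+1)$, $f(s)\,q = q\,f(s-1)$ and $f(s)\,r = r\,f(s)$ for every $f \in \C[s]$, viewed as operators on $M$.

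Next I would rewrite these to move $p,q,r$ to the left of $x(s)$. Substituting $f$ appropriately turns them into $p\,x(s) = x(s-1)\,p$, $q\,x(s) = x(s+1)\,q$ and $r\,x(s) = x(s)\,r$. Applying each operator to the generator $1$, and recalling that $\tau(x)=x(s-1)$, $\tau^{-1}(x)=x(s+1)$, while $s$ acts by multiplication, yields
\[
p\cdot x = \tau(x)\,(p\cdot 1), \qquad q\cdot x = \tau^{-1}(x)\,(q\cdot 1), \qquad r\cdot x = x\,(r\cdot 1),
\]
while $s\cdot x = sx$ is just the free-module action. This is exactly the asserted system.

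I do not expect a genuine obstacle: the whole content is the passage from the degree-one relations to the shift-by-$\tau$ relations, and the only real care needed is bookkeeping the direction of the shift, namely ensuring that commuting $p$ past $x(s)$ produces $\tau(x)=x(s-1)$ rather than $x(s+1)$, and that $q$ produces $\tau^{-1}(x)$, since reversing either would invert the shift. The argument uses nothing about $M$ beyond freeness over $\C[s]$, so it applies uniformly to every object of $\mathcal{M}(H_4,\mathfrak{h})$.
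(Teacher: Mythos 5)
Your proposal is correct and is exactly the argument the paper has in mind: the paper offers no written proof, merely stating that the lemma "easily follows from the Lie brackets of $H_4$," and your derivation — turning the brackets $[s,p]=p$, $[s,q]=-q$, $[r,s]=0$ into the operator identities $p\,x(s)=x(s-1)\,p$, $q\,x(s)=x(s+1)\,q$, $r\,x(s)=x(s)\,r$ and applying them to the free generator $1$ — is precisely the omitted routine verification, with the shift directions handled correctly.
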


\begin{lemm}\label{zeroca}
    Let $M\in \mathcal{M}(H_4,\mathfrak{h})$. 
    \begin{enumerate}
        \item If $p.1=0$ or $q.1=0$ then $r.1=0$. In these cases $M$ will be isomorphic to $M_{(g,0)}$ or $M_{(0,g)}$ for some $g\in \C[s]$.
        \item If $p.1=q.1=0$, then $M$ will be isomorphic to $M_0.$
    \end{enumerate}
\end{lemm}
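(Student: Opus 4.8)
The plan is to reduce both assertions to the single bracket relation $[p,q]=r$, evaluated on the cyclic generator $1\in\C[s]$, using the explicit form of the action supplied by the preceding lemma. Write $P=p.1$, $Q=q.1$ and $R=r.1$, all elements of $\C[s]$; by that lemma the entire $H_4$-action on an arbitrary $x\in\C[s]$ is determined by these three polynomials, so everything comes down to expressing $R$ in terms of $P$ and $Q$.

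First I would compute $r.1$ from $r=[p,q]$ acting on $1$. Feeding the inner polynomials into the rules $p.x=\tau(x)P$ and $q.x=\tau^{-1}(x)Q$ gives $p.(q.1)=\tau(Q)\,P$ and $q.(p.1)=\tau^{-1}(P)\,Q$, whence
$$R=r.1=p.(q.1)-q.(p.1)=\tau(Q)\,P-\tau^{-1}(P)\,Q,$$
that is, $R(s)=Q(s-1)P(s)-P(s+1)Q(s)$. The one place where care is needed is the bookkeeping of the shift operators: $p$ applies $\tau$ to its entire argument before multiplying by $P$, and $q$ applies $\tau^{-1}$, so mixing up the order of shift and multiplication is the only realistic source of error. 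I expect no deeper obstacle, since once this formula is in hand both parts are immediate.

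For part (1), setting $P=0$ annihilates both terms of the formula, giving $R=0$, and the symmetric choice $Q=0$ does the same; thus $r.1=0$ in either case. With $R=0$ the action collapses to a single nontrivial operator: when $P=0$ only $q.x=\tau^{-1}(x)Q$ survives, which is exactly the defining action of $M_{(0,Q)}$ with $g=Q$, and when $Q=0$ only $p.x=\tau(x)P$ survives, matching $M_{(P,0)}$ with $g=P$. Since the underlying space is $\C[s]$ in every case and the operators agree, the identity map furnishes the claimed isomorphism. For part (2), taking $P=Q=0$ forces $R=0$ by the same formula, so $p$, $q$, $r$ all act as zero while $s$ acts by multiplication, which is precisely $M_0$; the remaining relations $[p,r]=0$ and $[q,r]=0$ are then automatic because $R=0$.
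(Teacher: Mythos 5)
Your proposal is correct and follows essentially the same route as the paper: evaluating $r=[p,q]$ on the generator $1$ to get $r.1=\tau(q.1)\,p.1-\tau^{-1}(p.1)\,q.1$, observing this vanishes when $p.1=0$ or $q.1=0$, and then matching the surviving action with the defining formulas for $M_{(g,0)}$, $M_{(0,g)}$, or $M_0$. The paper's proof is just a terser version of the same computation.
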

\begin{proof}
    There is nothing to prove for (2). Suppose $p.1=0$. Then we have $r.1=p.q.1-q.p.1=\tau(q.1) p.1-q.p.1=0$. Similarly, we prove the other part.
\end{proof}
\begin{lemm}\label{cen}
 Let $M\in \mathcal{M}(H_4,\mathfrak{h})$. If $p.1\neq 0$ or $q.1\neq 0$, then we have $r.1\in \C$,   
\end{lemm}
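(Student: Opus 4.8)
The plan is to exploit the centrality of $r$ in $H_4$. Write $P := p.1$, $Q := q.1$ and $R := r.1$, all viewed as elements of $\C[s]$; by the preceding lemma the generators act on an arbitrary $x\in M$ by $p.x=\tau(x)P$, $q.x=\tau^{-1}(x)Q$ and $r.x=xR$. Since $[r,p]=[r,q]=0$ in $H_4$, the operators by which $r$, $p$ and $q$ act on $M$ must commute accordingly, and I would convert each of these two commutation relations into a polynomial identity in $\C[s]$.

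First I would use $[r,p]=0$, which gives the operator identity $r\circ p=p\circ r$ on $M$. Computing directly, $r.(p.x)=\tau(x)PR$, while $p.(r.x)=p.(xR)=\tau(xR)P=\tau(x)\tau(R)P$. Equating these for all $x$ and cancelling the common factor $\tau(x)$ yields $P\bigl(R-\tau(R)\bigr)=0$ in the integral domain $\C[s]$. Symmetrically, $[r,q]=0$ produces $Q\bigl(R-\tau^{-1}(R)\bigr)=0$.

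Now suppose $p.1\neq 0$, i.e. $P\neq 0$. Since $\C[s]$ has no zero divisors, the first identity forces $R=\tau(R)$. If $R$ were non-constant, Lemma \ref{deg} would give $\deg\bigl(\tau(R)-R\bigr)=\deg(R)-1\geq 0$, so $\tau(R)-R\neq 0$, a contradiction; hence $R$ is constant. The case $q.1\neq 0$ is handled identically, using instead the second identity together with $R=\tau^{-1}(R)$. In either case $r.1=R\in\C$, which is exactly the assertion.

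The computation is short once the correct relation is isolated, so I do not expect a serious obstacle. The only point requiring care is conceptual: it is the \emph{centrality} of $r$, rather than the defining bracket $[p,q]=r$, that pins down $r.1$, and the argument relies on $\C[s]$ being an integral domain so that the nonzero factor $P$ (respectively $Q$) may be cancelled before invoking Lemma \ref{deg}.
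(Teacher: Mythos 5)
Your proof is correct and follows essentially the same route as the paper: both exploit the centrality of $r$ (the bracket $[p,r]=0$, respectively $[q,r]=0$) applied to the module to obtain $\bigl(\tau(r.1)-r.1\bigr)p.1=0$, then cancel the nonzero factor in the integral domain $\C[s]$ and invoke Lemma \ref{deg} to force $r.1$ to be constant. The only cosmetic difference is that you state the identity as an operator equation on all of $M$ and cancel $\tau(x)$, whereas the paper simply evaluates the bracket at $1$.
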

 \begin{proof}
     Suppose $p.1\neq 0$. Now $[p,r].1=0$, which will give us $(\tau (r.1)-r.1)p.1=0$. This will imply that $r\in \C$ by Lemma \ref{deg}. Similarly, we can prove the other part of the Lemma.
\end{proof}
With the thanks to above three Lemmas, we only need to consider the case $p.1\neq 0$ and $q.1\neq 0$. From Lemma \ref{cen} we have $r.1\in \C$.\\
\begin{prop}\label{nonzeroca}
 Let $M\in \mathcal{M}(H_4,\mathfrak{h})$ with $p.1$ and $q.1$ are non-zero. Then $M\cong M_{(h,b)}$ or $M_{(b,h)}$ for some $h(s)=a_1s+a_2\in \C[s]$ and $b\in \C^*$
\end{prop}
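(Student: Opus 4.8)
The action of $H_4$ on $M\cong\C[s]$ is completely determined by the three polynomials $f:=p.1$, $g:=q.1$ and $c:=r.1$, via the structure formulas $p.x=\tau(x)f$, $q.x=\tau^{-1}(x)g$, $r.x=xc$, $s.x=sx$ recorded above; by Lemma \ref{cen} we already know $c\in\C$. My plan is therefore to determine exactly which triples $(f,g,c)$ can occur. The relations $[s,p]=p$ and $[s,q]=-q$ hold identically (a direct check using $\tau(s)=s-1$), and $r$ being central imposes nothing new once $c$ is constant, so the only genuine constraint comes from $[p,q]=r$. Evaluating this at $1\in M$ gives
$$c = p.(q.1) - q.(p.1) = \tau(g)\,f - \tau^{-1}(f)\,g,$$
that is, $g(s-1)f(s) - f(s+1)g(s) = c$. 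Everything thus reduces to analyzing this single functional equation for nonzero $f,g\in\C[s]$.

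The crux is a degree count. First I would rewrite the left-hand side as
$$\tau(g)\,f - \tau^{-1}(f)\,g = -f\,\bigl(g-\tau(g)\bigr) - g\,\bigl(\tau^{-1}(f)-f\bigr).$$
Writing $m=\deg f$, $n=\deg g$ with leading coefficients $a_m,b_n$, Lemma \ref{deg} (and its evident analogue for $\tau^{-1}$) shows that when $m,n\ge 1$ the two summands have degree $m+n-1$ with leading coefficients $-n\,a_m b_n$ and $-m\,a_m b_n$ respectively, so the coefficient of $s^{m+n-1}$ equals $-(m+n)a_m b_n\ne 0$. Hence the right-hand side has degree exactly $m+n-1\ge 1$, which is incompatible with it being the constant $c$. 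Therefore $f$ and $g$ cannot both be non-constant. If exactly one of them, say $f$, is non-constant while $g=b\in\C^*$, then $g-\tau(g)=0$ and the identity collapses to $c=-b\,\bigl(f(s+1)-f(s)\bigr)$, whose right side has degree $m-1$; forcing it to be constant gives $m=1$. The symmetric conclusion holds when $g$ is the non-constant one.

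It then remains to read off the modules in each surviving case. If $f=h(s)=a_1 s+a_2$ has degree one and $g=b\in\C^*$, the equation yields $c=-a_1 b$, so $(f,g,c)$ coincides with the defining data of $M_{(h,b)}$; symmetrically, $f=b\in\C^*$ and $g=h(s)$ of degree one give $c=-a_1 b$ and the data of $M_{(b,h)}$ (the case of two nonzero constants is the specialization $a_1=0$, i.e.\ $c=0$). Since two modules in $\mathcal{M}(H_4,\mathfrak{h})$ with identical triples $(f,g,c)$ are literally equal, the identity map on $\C[s]$ furnishes the required isomorphism $M\cong M_{(h,b)}$ or $M\cong M_{(b,h)}$. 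I expect the only real obstacle to be the degree computation in the second paragraph: one must verify that the top-degree terms cancel while the next coefficient $-(m+n)a_m b_n$ survives, as this nonvanishing is precisely what forces $\deg f+\deg g\le 1$ and thereby rules out all higher-degree possibilities.
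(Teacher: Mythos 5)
Your proof is correct and takes essentially the same route as the paper's: both arguments rest on Lemma \ref{cen} (so $r.1\in\C$) together with the single relation $[p,q]=r$ evaluated at $1$, extract the degree bound $\deg(p.1)+\deg(q.1)\le 1$ from that functional equation, and finish with the same case analysis $(0,0)$, $(1,0)$, $(0,1)$. The only difference is mechanical: the paper telescopes $\tau(q.1)\,p.1-\tau^{-1}(p.1)\,q.1$ into $\tau(P)-P$ with $P=\tau^{-1}(p.1)\,q.1$ and applies Lemma \ref{deg} once, whereas you expand into two terms and verify that the leading coefficients $-n\,a_mb_n$ and $-m\,a_mb_n$ reinforce rather than cancel, giving the same conclusion.
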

 \begin{proof}
   We know $r.1=p.q.1-q.p.1=\tau (q.1) p.1-\tau^{-1} (p.1) q.1=\tau(\tau^{-1}(p.1)q.1)-\tau^-1(p.1)q.1$. By assumption $\tau^{-1}(p.1)q.1$ is a non-zero polynomial in $\C[s]$, therefore by Lemma \ref{deg}, we have either degree of $\tau^{-1}(p.1)q.1$ is one or $p.1$ and $q.1$ both non-zero constant. \\
   {\bf Case I :} Let $p.1$ and $q.1$ are constant. It is clear that in this case $M \cong M_{a,b}$.\\
   {\bf Case II:} Let one of $p.1$ and $q.1$ is not constant. This will imply that the possibility of $\text{deg}(p.1, q.1)$ is $(1,0)$ or $(0,1)$. If $\text{deg}(p.1, q.1)=(1,0)$, then assume $p.1=a_1s+a_2$ and $q.1=b$ for some $a_1, b\in \C^*$ and $a_2\in \C$. Then $r.1=p.q.1-q.p.1=b(a_1s+a_2)-q.(a_1s+a_2)=b(a_1s+a_2)-b(a_1(s+1)+a_2)=-a_1b$. So $M\cong M_{(h,b)}$. Similarly, we can prove if $\text{deg}(p.1, q.1)=(0,1)$, then $M\cong M_{(b,h)}$.

 \end{proof}
\begin{theo}\label{Th H_4}
    $ \mathcal{M}(H_4,\mathfrak{h})=\{M_{(g,0)},M_{(0,g)},M_{(h,b)},M_{(b,h)},M_{(a,b)}, M_0: g,h \in \C[s],\;a, b\in \C^*,\; \text{deg}(h)=1\}$
\end{theo}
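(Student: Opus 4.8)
The plan is to prove the asserted set equality by establishing both inclusions. The inclusion $\supseteq$ is exactly the verification already recorded above, namely that each of $M_{(g,0)}$, $M_{(0,g)}$, $M_{(h,b)}$, $M_{(b,h)}$, $M_{(a,b)}$ and $M_0$ carries a well-defined $H_4$-action whose restriction to $\mathfrak{h}=\C s$ makes $\C[s]$ a free $\C[s]$-module of rank one; so these modules genuinely lie in $\mathcal{M}(H_4,\mathfrak{h})$. The substance of the theorem is therefore the reverse inclusion $\subseteq$: every $M\in\mathcal{M}(H_4,\mathfrak{h})$ is isomorphic to one of the six listed modules.

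For this I would fix $M\in\mathcal{M}(H_4,\mathfrak{h})$ and identify $M$ with $\C[s]$ via the rank-one freeness, so that $1$ is a cyclic generator. The structural Lemma preceding Lemma \ref{zeroca} shows that the entire action is determined by the three elements $p.1,\,q.1,\,r.1\in\C[s]$ through the formulas $p.x=\tau(x)\,p.1$, $q.x=\tau^{-1}(x)\,q.1$, $r.x=x\,r.1$, $s.x=sx$. Hence classifying $M$ up to isomorphism amounts to pinning down these three data, and the natural organizing principle is a case split according to whether $p.1$ and $q.1$ vanish.

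The case analysis then simply assembles the earlier results. If $p.1=q.1=0$, Lemma \ref{zeroca}(2) gives $M\cong M_0$. If exactly one of $p.1,q.1$ vanishes, Lemma \ref{zeroca}(1) forces $r.1=0$ and yields $M\cong M_{(g,0)}$ or $M\cong M_{(0,g)}$ for a suitable $g\in\C[s]$. If both $p.1$ and $q.1$ are nonzero, Lemma \ref{cen} gives $r.1\in\C$, and Proposition \ref{nonzeroca} (together with its proof) splits this into the subcase where $p.1,q.1$ are both constant, giving $M\cong M_{(a,b)}$, and the subcase where one has degree one and the other degree zero, giving $M\cong M_{(h,b)}$ or $M\cong M_{(b,h)}$ with $\deg(h)=1$. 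Collecting the outcomes of all cases produces exactly the six families in the statement.

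Since the hard analytic input has already been isolated in the earlier lemmas, the only place where genuine work remains is making sure the case split is \emph{exhaustive and non-overlapping at the level of data}. The one delicate point---already the crux of Proposition \ref{nonzeroca}---is ruling out $\deg(p.1)+\deg(q.1)\ge 2$ when both are nonzero: writing $r.1=\tau(f)-f$ with $f=\tau^{-1}(p.1)\,q.1$ and invoking Lemma \ref{deg} together with $r.1\in\C$ forces $\deg f\le 1$, which is precisely what confines the nonzero case to the three admissible degree patterns $(0,0)$, $(1,0)$, $(0,1)$. I expect this degree bookkeeping to be the main (and essentially the only) obstacle; the remainder is a routine synthesis of the preceding statements.
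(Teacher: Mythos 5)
Your proposal is correct and follows essentially the same route as the paper: the paper's proof of Theorem \ref{Th H_4} is exactly the assembly of Lemma \ref{zeroca} and Proposition \ref{nonzeroca} via the case split on the vanishing of $p.1$ and $q.1$, with the crux being the degree argument you identify (writing $r.1=\tau(f)-f$ for $f=\tau^{-1}(p.1)\,q.1$ and invoking Lemma \ref{deg} together with $r.1\in\C$ to force $\deg f\le 1$). Your write-up is, if anything, more explicit than the paper's one-line proof about why the case analysis is exhaustive.
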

 \begin{proof}
     Follows from Lemma \ref{zeroca} and Proposition \ref{nonzeroca}.
 \end{proof}
 Now we are interested to find the conditions when the modules of $\mathcal{M}(H_4,\mathfrak{h})$ are irreducible.
 \begin{theo}\label{Th 3.6}
   \begin{enumerate}
       \item The modules $M_{(g,0)}$ and $M_{(0,g)}$ are irreducible iff $g$ is a non-zero constant polynomial.
       \item The modules $M_{(h,b)}$, $M_{(b,h)}$ and $M_{(a,b)}$ are irreducible.
      \item For a non-constant polynomial $g(s)=C(s-\al_1) \cdots (s-\al_k)$, the submodules of $M_{(g,0)}$ are given by $\dis{\prod_{r=0}^{l_1}} (s-\al_{i_1}-r) \dis{\prod_{r=0}^{l_2}} (s-\al_{i_2}-r) \cdots \dis{\prod_{r=0}^{l_j}} (s-\al_{i_j}-r) $ for $i_j \in \{1,2, \cdots, k\} $ and $l_j \in \N$. 
   \end{enumerate}  
 \end{theo}
 \begin{proof}
    Let $g$ be a non-zero constant polynomial and $W$ be a non-zero submodule of $M_{(g,0)}.$ Note that due to the action of $s$, it is sufficient to prove that $1 \in W$. Let $f(s)=\dis{\sum_{i=0}^{k}}a_is^{k-i} $ be a non-zero polynomial of degree $k$ in $W$. Then $p.f(s)=f(s-1)g \in W.$ Now consider the vector
   \begin{align*}
       f_1(s)=\frac{1}{g}p.f(s)-f(s) &= \dis{\sum_{i=0}^{k}}a_i(s-1)^{k-i}-\dis{\sum_{i=0}^{k}}a_is^{k-i} \cr
       &=-k a_0s^{k-1} \; (mod \; \dis{\bgop_{i=2}^{k}}\C s^{k-i}).
\end{align*} 
This means $f_1(s) $ is a non-zero polynomial of degree less than $k$ in $W$. Continuing this process we have $1 \in W.$ This proves that $M_{(g,0)}$ is irreducible.\\
On the other hand if $g$ is a polynomial of degree greater equal to 1, then the ideal generated by $g$ is a proper submodule of $M_{(g,0)},$ 
Hence $M_{(g,0)}$ is reducible. Similar proof works for $M_{(0,g)}.$\\
To prove (2) consider a non-zero submodule $W_1$ of $M_{(h,b)}.$ Let $f(s)$ be a degree $k$ non-zero polynomial in $W_1.$ Now consider the vector $\frac{1}{b}q.f(s)-f(s) $ and proceed similarly like (1) to conclude $1 \in W_1.$ This proves that $M_{(h,b)}$ is irreducible. Similarly $M_{(b,h)}$ is irreducible.\\
(3) Let $W$ be a non-zero proper submodule of $M_{(g,0)}$. It is easy to see by the action of $s$ on $M_{(g,0)}$ that $W$ is an ideal of $\C[s].$ Hence $W$ is generated by some non-zero non-constant polynomial $F(s)$, as $\C[s]$ is a PID. Now $p.F(s)=F(s-1)g(s) \in W$, therefore $F(s)$ divides $F(s-1)g$. Therefore all the roots of $F(s)$ comes form roots of $g(s)$ and $F(s-1).$ Let $d(s)$ be the gcd of $F(s)$ and $g(s)$. Then $d(s)$ can not be 1, otherwise $F(s)$ divides $F(s-1)$ and hence $F(s)= constant.$ Therefore $W=M_{(g,0)}$ in this case. So $F(s)$ and $g(s)$ must have a common factor. Now it follows from the fact $F(s)|F(s-1)g(s)$ that, $F(s)= \dis{\prod_{r=0}^{l_1}} (s-\al_{i_1}-r) \dis{\prod_{r=0}^{l_2}} (s-\al_{i_2}-r) \cdots \dis{\prod_{r=0}^{l_t}} (s-\al_{i_t}-r)$, for some $l_1, \cdots, l_t \in \N$ and $\{\al_{i_1}, \cdots \al_{i_t} \} \subseteq \{\al_1, \al_2, \cdots, \al_k \}$.

\end{proof}

\section{Cartan Free modules over affine Nappi-Witten Lie algebras}\label{ANW}

In this section, we will study the categories $\mathcal{M}(\widehat{H_4}, \widehat {\mf h}  )$ for the Lie algebras $\what{H_4}$. Suppose $\C[s,d]$ be the polynomial algebra over two indeterminant $s$ and $d$. Let us consider $\tau,\; \sigma$ be two automorphism of $\C[s,d]$ defined by
\[\tau(s)=s-1,\; \tau (d)=d,\; \sigma(s)=s,\; \sigma (d)=d-1.\]
We see that $\tau$ and $\sigma$ are two commuting automorphisms.\\
Now we construct a class of modules for $\widehat{H_4}$. Suppose $M$ is a Cartan free module over the Nappi-Witten Lie algebra $H_4$. For $\alpha\in \C^*$ and a sequence of complex numbers $\boldsymbol{\beta}=\{\beta_i:i\in \Z,\; \beta_0=0\}$, define $\what{M}(\alpha, \bs \beta)=M\otimes \C[d]$ on which action of $\widehat{H_4}$ is given by:
\begin{equation} \label{actions of mod}
    \begin{cases}
        p\otimes t^k. x(s,d)= \alpha^k \tau \sigma^k(x(s,d))p.1\\
        q\otimes t^k. x(s,d)= \alpha^k \tau^{-1}\sigma^k(x(s,d))q.1\\
        r\otimes t^k. x(s,d)=\alpha^k \sigma^k(x(s,d))r.1\\
        s\otimes t^k. (s,d)=  \sigma^k(x(s,d)) (\alpha^ks+\beta_k)\\
        K.(x(s,d))=0\\
        d.x(s,d)=dx(s,d),
    \end{cases}
    \end{equation}
 where $x(s,d)\in \C[s,d]$ and $p.1, q.1,r.1$ are the actions obtained from the action of $H_4$ on $M$ (i.e, these actions obtained from (3.1)-(3.5) ).\\
Let us denote the above obtained modules for $\what {H_4}$ as $\what M_{(g,0,\al, \bs  \be)}$, $\what M_{(0,g,\al,\bs \be)}$, $\what M_{(h,b,\al,\bs \be)}$, $\what M_{(b,h,\al,\bs \be)}$ and $\what M_{(a,b,\al,\bs \be)}$ when corresponding Cartan free rank one $H_4$ modules are $ M_{(g,0)}$, $ M_{(0,g)}$, $ M_{(h,b)}$, $ M_{(b,h)}$ and $M_{(a,b)}$ respectively. 

 For a sequence of functions $\mathbf{f}=\{f_k(s): f_k(s)\in \C[s],\; f_0(s)=s , \, k \in \Z\}$, define a $\what H_4$-module structure on $\what{M}(\mathbf{f})=\C[s,d]$ on which action of $\widehat{H_4}$ is given by:
 \begin{equation}
    \begin{cases}
        s\otimes t^k. x(s,d)=  \sigma^k(x(s,d)) f_k(s)\\
         d.x(s,d)=dx(s,d),\\
       p\otimes t^k. x(s,d)= q\otimes t^k. x(s,d)= r\otimes t^k. x(s,d)=K.(x(s,d))=0
       
    \end{cases}
    \end{equation}

\begin{prop}
Under actions of (4.1) and (4.2), $\what{M}(\alpha, \bs \beta)$ and $\what{M}(\mathbf{f})$ forms $\what H_4$-module.
\end{prop}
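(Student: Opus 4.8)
The plan is to verify directly that the prescribed operators satisfy the defining relations of $\what{H_4}$. Since $\what{H_4}$ is spanned by $p(k), q(k), r(k), s(k)$ $(k\in\Z)$ together with $d$ and $\mathbf{k}$, and the action extends associatively to $U(\what{H_4})$, it suffices to check $X.(Y.x)-Y.(X.x)=[X,Y].x$ for every pair $X,Y$ of these spanning elements and every $x=x(s,d)\in\C[s,d]$. Throughout I will use that $\tau$ and $\sigma$ commute, that $\sigma$ fixes $\C[s]$ (hence fixes $p.1, q.1, r.1$ and each scalar $\alpha^m s+\beta_m$ viewed in $\C[s]$), that $\tau$ fixes $\C[d]$, and that $\sigma^m(d)=d-m$.

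For $\tilde M(\alpha,\beta)$ I would organize the computation by bracket type. The brackets among like generators, such as $[p(m),p(n)]=0$ or $[p(m),r(n)]=0$, and all brackets with $d$, such as $[d,h(m)]=m\,h(m)$, reduce immediately to the stated properties of $\tau$ and $\sigma$; for instance $[d,s(m)].x=(d-(d-m))\,\sigma^m(x)(\alpha^m s+\beta_m)=m\,s(m).x$. The two brackets $[s(m),p(n)]=p(m+n)$ and $[s(m),q(n)]=-q(m+n)$ are where the scalars $\alpha^m s+\beta_m$ enter; both follow from the telescoping identities $(\alpha^m s+\beta_m)-(\alpha^m(s-1)+\beta_m)=\alpha^m$ and $(\alpha^m s+\beta_m)-(\alpha^m(s+1)+\beta_m)=-\alpha^m$, and I note that $\beta_m$ cancels, which is exactly why the sequence $\beta$ may be chosen freely.

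The only genuinely substantive relation is $[p(m),q(n)]=r(m+n)+m\delta_{m+n,0}\mathbf{k}$. Applying $p(m)$ after $q(n)$ and vice versa, and simplifying $\tau\sigma^m\cdot\tau^{-1}\sigma^n=\sigma^{m+n}$, the left-hand side collapses to $\alpha^{m+n}\sigma^{m+n}(x)\,[\tau(q.1)\,p.1-\tau^{-1}(p.1)\,q.1]$. Here I invoke the identity $r.1=\tau(q.1)\,p.1-\tau^{-1}(p.1)\,q.1$ recorded in the proof of Proposition \ref{nonzeroca}, together with $r.1\in\C$ from Lemma \ref{cen}, so that the bracket equals $\alpha^{m+n}\sigma^{m+n}(x)\,r.1=r(m+n).x$, matching the right-hand side because $\mathbf{k}$ acts as zero. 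The main (though mild) obstacle is precisely the bookkeeping of the central extension: since $\mathbf{k}=K$ acts trivially, the cocycle terms $m\delta_{m+n,0}\mathbf{k}$ must vanish identically, so one must confirm that the classical part of each commutator already produces the correct answer. For $[r(m),s(n)]=m\delta_{m+n,0}\mathbf{k}$ both composites equal $\alpha^m\sigma^{m+n}(x)(\alpha^n s+\beta_n)\,r.1$ and cancel, and for $[p(m),q(-m)]$ the output is exactly $r(0).x$ with no residual term; this consistency forces the module to have level zero, and it holds because $r.1$ is a constant fixed by both $\tau$ and $\sigma$.

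For $\tilde M(\mathbf{f})$ the verification is lighter. Since $p(k), q(k), r(k)$ and $K$ all act as zero, every relation except those involving only $s(k)$ and $d$ reads $0=0$ on both sides and is immediate. The two remaining checks are $[s(m),s(n)]=0$, which holds automatically because $f_m(s)$ and $f_n(s)$ lie in $\C[s]$ and are fixed by $\sigma$, so both composites equal $\sigma^{m+n}(x)\,f_m(s)\,f_n(s)$; and $[d,s(m)]=m\,s(m)$, which follows from $\sigma^m(d)=d-m$ exactly as above. In particular no compatibility relation among the functions $f_k$ is forced, which explains why the sequence $\mathbf{f}$ is arbitrary subject only to $f_0(s)=s$.
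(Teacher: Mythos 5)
Your proof is correct and follows essentially the same route as the paper: a direct case-by-case verification of the bracket relations on $\C[s,d]$, hinging on the same key facts, namely the identity $r.1=\tau(q.1)\,p.1-\tau^{-1}(p.1)\,q.1$ inherited from the $H_4$-module $M$, the constancy of $r.1$, the telescoping $(\alpha^m s+\beta_m)-\tau(\alpha^m s+\beta_m)=\alpha^m$, and the commutativity of $\tau$ and $\sigma$. The only difference is cosmetic: you spell out a few of the central-term checks (such as $[r(m),s(n)]$ and the level-zero consistency in $[p(m),q(-m)]$) that the paper subsumes under ``other relations are easy to check.''
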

\begin{proof}
Let $k,l \in \Z$ and $x(s,d) \in \C[s,d]$. Now we compute all possible relations to prove that actions in (4.1) and (4.2) are module actions.\\
\begin{align}
 (i). \,\,   [p\ot t^k,q \ot t^l].x(s,d) &=(p\ot t^k.q \ot t^l-q \ot t^l.p\ot t^k).x(s,d) \cr
    & = \al^lp\ot t^k.x(s+1,d-l)q.1 -\al^kq\ot t^l .x(s-1,d-k)p.1  \cr
    &=\al^{k+l}(x(s,d-l-k)\tau(q.1)p.1-x(s,d-l-k)\tau^{-1}(p.1)q.1 ) \cr
    &=\al^{k+l}x(s,d-l-k)(\tau(q.1)p.1- \tau^{-1}(p.1)q.1)\cr
    &=\al^{k+l}x(s,d-l-k)r.1 \cr
    &=r\ot t^{k+l}.x(s,d)\cr
\end{align}

\begin{align} 
(ii). \, \, [p\ot t^k,r \ot t^l].x(s,d)&=(p\ot t^k.r \ot t^l-r \ot t^l.p\ot t^k).x(s,d) \cr
    &=\al^lp\ot t^k.x(s,d-l)r.1 -\al^kr\ot t^l .x(s-1,d-k)p.1   \cr
    &=\al^{k+l}(x(s-1,d-l-k)\tau(r.1)p.1-x(s-1,d-l-k)p.1r.1)  \cr
    &=0,\cr 
\end{align}
 since $\tau(r.1)=r.1$.    
\begin{align}
    (iii). \,  [p\ot t^k,s \ot t^l].x(s,d)&=(p\ot t^k.s \ot t^l-s \ot t^l.p\ot t^k).x(s,d) \cr
    &=p\ot t^k.x(s,d-l)(\al^ls.1+\be_l) -\al^ks\ot t^l .x(s-1,d-k)p.1   \cr
    &=\al^kx(s-1,d-l-k)[\tau(\al^ls.1 +\be_l)p.1-\sigma(p.1)(\al^ls.1+\be_l)] \cr
    &=\al^kx(s-1,d-l-k)[(\al^l(s-1)+\be_l)p.1-(\al^ls+\be_l) p.1)\cr
    &=-\al^{k+l}x(s-1,d-l-k)p.1=-p\ot t^{k+l}.x(s,d)\cr
\end{align}    
    
\begin{align}
(iv). \, \,[q\ot t^k,r \ot t^l] &=(q\ot t^k.r \ot t^l-r \ot t^l.q\ot t^k).x(s,d) \cr
    &=\al^lq\ot t^k.x(s,d-l)r.1 -\al^kr\ot t^l .x(s+1,d-k)q.1   \cr
    &=\al^{k+l}(x(s+1,d-l-k)\tau^{-1}(r.1)q.1-x(s+1,d-l-k)q.1r.1)  \cr
    &=0=[q\ot t^k, r\ot t^l].x(s,d), \cr 
    \end{align}
    since $\tau^{-1}(r.1)=r.1$.
    \begin{align}
    (v). \, \, [q\ot t^k,s \ot t^l] &=(q\ot t^k.s \ot t^l-s \ot t^l.q\ot t^k).x(s,d) \cr
    &=q\ot t^k.x(s,d-l)(\al^ls.1+\be_l) -\al^ks\ot t^l .x(s+1,d-k)q.1   \cr
    &=\al^kx(s+1,d-l-k)[\tau(\al^ls.1 +\be_l)p.1-\sigma(q.1)(\al^ls.1+\be_l)]  \cr
    &=\al^kx(s+1,d-l-k)[(\al^l(s+1)+\be_l)q.1-(\al^ls+\be_l) q.1)\cr
    &=\al^{k+l}x(s+1,d-l-k)q.1=q\ot t^{k+l}.x(s,d)\cr
     \end{align}
For the action (4.1) other relations are easy to check. Now we check the module relations for the actions of (4.2).\\
\begin{align}
(s\ot t^k.s \ot t^l-s \ot t^l.s\ot t^k).x(s,d) 
&=x(s,d-k-l)f_k(s)f_l(s)-x(s,d-k-l)f_k(s)f_l(s)\cr
&=0=[s\ot t^k,s \ot t^l].x(s,d)
\end{align}
Again,\\
\begin{align}
[d,s\ot t^l].x(s,d)&=(d.s \ot t^l-s \ot t^l.d).x(s,d) \cr
&= dx(s,d-l)f_l(s)-(d-l)x(s,d-l)f_l(s)=lx(s,d-l)f_l(s)\cr
&=ls\ot t^l.x(s,d).
 \end{align}

It is easy to see that, due to the zero actions of $p \ot t^k, q\ot t^k, r\ot t^k$ in (4.2) all other relations satisfies trivially. This completes the proof.

\end{proof}

 \begin{theo}\label{Main theo}
$\mathcal{M}(\what{H_4}, \what{\mathfrak{h}})=\{\what M_{(g,0,\al,\bs \be)},\what M_{(0,g,\al,\bs \be)},\what M_{(h,b,\al, \bs \be)} , \what M_{(b,h,\al,\bs \be)}, \what M_{(a,b,\al, \bs \be)}, \what{M}(\mathbf{f}) : g,h \in \C[s],\;a, b\in \C^*,\; \text{deg}(h)=1,\,  \alpha \in \C^*,\;\bs{\beta} \in \C^{\Z},\; \mathbf{f}\in \C[s]^{\Z}, \, f_0(s)=s\}$, where $A^\Z$ denote the set of all functions from $\Z$ to $A.$

 \end{theo}
 Suppose $M$ is a Cartan-free module over $\widehat{H_4}$, then as vector space $M=\C[s,d]$.
 Assume that $p\otimes t^k.1 =g_k(s,d),\; q\otimes t^k.1 =h_k(s,d),\;   s\otimes t^k=f_k(s,d)\in \C [s,d]$, where $k\in \Z$. Note that if $g_k(s,d)$ (or $h_k(s,d)$ ) is zero for some $k\in \Z$, then $g_0(s,d)$  (or $h_0(s,d)$) is also zero polynomial.  For notational convenience we use $deg_d(f)$ and $deg_s(f)$ to denote the degree of $d$
 and $s$ of a polynomial $f \in \C[s,d]$. Now we proceed to prove Theorem 4.2, we start with the following lemma.
 

\begin{lemm}\label{lem degdf}
    Let $f_k(s,d)$ be as above. If for some $k\in \Z$, $f_k$ is a non-zero polynomial, then we will have $deg_d(f_k)=0$.
\end{lemm}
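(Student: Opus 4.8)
The plan is to first make the action of the currents $s\ot t^k$ completely explicit, and then to read a constraint on $\deg_d f_k$ out of the mutual commutativity of these currents. Recall that $M=\C[s,d]$ with $s\ot t^0$ and $d$ acting as multiplication by $s$ and by $d$ respectively. First I would derive the formula $(s\ot t^k).y=\sigma^k(y)\,f_k$ for every $y\in\C[s,d]$: writing $y=y(s,d).1$ and using that $s\ot t^0$ commutes with $s\ot t^k$ (so that $s\ot t^k$ is $\C[s]$-linear) together with $[d,s\ot t^k]=k\,s\ot t^k$ (so that $(s\ot t^k)\,d=(d-k)(s\ot t^k)$ as operators), one moves the polynomial $y(s,d)$ past $s\ot t^k$, picking up the shift $\sigma^k$ on the $d$-variable and leaving the factor $f_k=(s\ot t^k).1$.

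Next I would exploit that $(s,s)=0$ and $[s,s]=0$, so that $[s\ot t^k,s\ot t^l]=0$ for all $k,l$. Applying this to the generator $1$ and using the action formula yields the functional equation
\[
\sigma^k(f_l)\,f_k=\sigma^l(f_k)\,f_l,\qquad k,l\in\Z .
\]
The key point is to extract the $d$-degree information. Since $\sigma$ fixes the leading $d$-coefficient of any polynomial, the top-degree terms on the two sides agree automatically; comparing instead the coefficients of the second-highest power of $d$ (the shift $\sigma^k$ contributes $-k\,\deg_d(f_l)$ times the leading coefficient, and symmetrically on the right) gives, whenever $f_k$ and $f_l$ are both nonzero,
\[
k\,\deg_d(f_l)=l\,\deg_d(f_k).
\]

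With this identity the conclusion is immediate in the generic situation. Note that the choice $l=0$ is useless, since $f_0=s$ has $\deg_d f_0=0$ and the relation degenerates; instead I would take $l=-k$, which, provided $f_{-k}\neq 0$, gives $k\,\deg_d(f_{-k})=-k\,\deg_d(f_k)$, and since both degrees are non-negative integers this forces $\deg_d(f_k)=0$. I expect the main obstacle to be the degenerate case in which $f_{-k}$ — or indeed every $f_l$ with $l\neq 0,k$ — vanishes, for then the brackets among the $s\ot t^l$ alone impose no condition and the displayed relation is vacuous. To handle this branch I would bring in the mixed relations $[s\ot t^k,p\ot t^l]=p\ot t^{k+l}$ and $[s\ot t^k,q\ot t^l]=-q\ot t^{k+l}$, together with the central relation $[s\ot t^k,r\ot t^{-k}]=k\,\mathbf{k}$; evaluating these on $1$ links $f_k$ to the data $g_l,h_l$ of the $p$- and $q$-currents, and a parallel $d$-degree comparison there should pin down $\deg_d(f_k)=0$ even when the purely $s$-theoretic relations collapse. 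Controlling this degenerate branch is the delicate part of the argument.
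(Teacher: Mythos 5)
Your treatment of the non-degenerate case is correct and is essentially the paper's own argument: both hinge on evaluating $[s\ot t^k,s\ot t^{-k}].1=0$. The paper extracts the degree information by noting that $H(s,d)=f_k(s,d)f_{-k}(s,d-k)$ satisfies $H(s,d)=H(s,d+k)$, hence $\deg_d H=0$ and so $\deg_d f_k=\deg_d f_{-k}=0$; you instead compare second-highest $d$-coefficients in $\sigma^k(f_l)f_k=\sigma^l(f_k)f_l$ to get $k\deg_d(f_l)=l\deg_d(f_k)$ and then set $l=-k$. These are equivalent extractions of the same relation, and your computation is right.

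The substantive issue is the degenerate branch $f_{-k}=0$, which you correctly single out --- and which the paper's proof silently assumes away (the chain $\deg_d(f_k)=\deg_d(f_{-k})=\deg_d(H)=0$ presupposes $H\neq 0$). However, your proposed repair cannot be carried out. If $f_k\neq 0$ but $f_{-k}=0$, then $p.1=[s\ot t^{-k},p\ot t^{k}].1=\sigma^{-k}(g_k)f_{-k}-p\ot t^k.f_{-k}=0$, and likewise $q.1=0$; from this every $g_l$, $h_l$, $r\ot t^l.1$ and $\mathbf{k}.1$ vanishes, so each mixed relation you list ($[s\ot t^k,p\ot t^l]=p\ot t^{k+l}$, $[s\ot t^k,q\ot t^l]=-q\ot t^{k+l}$, $[s\ot t^k,r\ot t^{-k}]=k\mathbf{k}$) reads $0=0$ on the module and constrains $f_k$ not at all. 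In fact the statement is false in this branch: let all of $p\ot t^l,\,q\ot t^l,\,r\ot t^l,\,\mathbf{k}$ act by zero and take $f_0=s$, $f_1=d$, $f_l=0$ for $l\neq 0,1$. The only brackets that do not trivially evaluate to $0=0$ are $[s\ot t^m,s\ot t^n]=0$, which requires $\sigma^m(f_n)f_m=\sigma^n(f_m)f_n$ (satisfied here), and $[d,s\ot t^n]=n\,s\ot t^n$, which holds for an arbitrary $f_n$; so this is a genuine object of $\mathcal{M}(\widehat{H_4},\widehat{\mf h})$ with $\deg_d(f_1)=1$. Consequently the lemma is only valid under the additional hypothesis that $f_{-k}\neq 0$ (equivalently, in the regime $p.1\neq 0$ or $q.1\neq 0$, which is exactly where the paper invokes it in Lemma \ref{deg f_k}); your argument, like the paper's, proves precisely that weaker statement, and the degenerate branch you flagged is not a gap that can be closed but a counterexample (one that also affects Case II of Theorem \ref{Main theo}).
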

\begin{proof}
   We see that for all $k \neq 0,$ $0=[s\otimes t^k,s\otimes t^{-k}].1=f_k(s,d)f_{-k}(s,d-k)-f_k(s,d+k)f_{-k}(s,d)$, hence $H(s,d)=H(s, d+k)$, where $H(s,d)=f_k(s,d)f_{-k}(s,d-k)$. So we get $deg_{d}(f_k)=deg_d(f_{-k})=deg_d(H)=0$.  
\end{proof}
\begin{lemm}\label{deg f_k}
Suppose $g_0$ (or $h_0$) is a non-zero polynomial, then $deg_d(f_k)=0$ and $deg_s(f_k)\leq 1$ for all $k\in \Z$.
\end{lemm}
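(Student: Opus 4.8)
The first assertion $\deg_d(f_k)=0$ is already supplied by Lemma~\ref{lem degdf} for every $k$ with $f_k\neq 0$ (and the case $f_k=0$ is trivial), so the substance to be proved is the bound $\deg_s(f_k)\le 1$. I will treat the hypothesis $g_0\neq 0$; the case $h_0\neq 0$ is entirely analogous, obtained by replacing $p$ with $q$, using the brackets $[s(k),q(0)]=-q(k)$ and $[q(k),q(0)]=0$ and the shift $\tau^{-1}$ in place of $\tau$. Throughout I use the operator forms $s(k).x=\sigma^k(x)\,f_k$ and $p(\ell).x=\tau\sigma^\ell(x)\,g_\ell$ on $\C[s,d]$, which follow from the relations $[s(0),p(\ell)]=p(\ell)$, $[d,p(\ell)]=\ell p(\ell)$, $[d,s(k)]=k s(k)$ and $[s(0),s(k)]=0$ exactly as in the computation behind Lemma~\ref{lem degdf}; recall also that $\deg_d(f_k)=0$, so $f_k=f_k(s)$ and $\tau(f_k)=f_k(s-1)$.

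\textbf{Two polynomial identities.} The idea is to feed two different brackets into the cyclic vector $1$. Applying $[s(k),p(0)]=p(k)$ to $1$ gives
\begin{equation}\label{relA}
 g_k=\sigma^k(g_0)\,f_k(s)-f_k(s-1)\,g_0 .
\end{equation}
Because $(p,p)=0$ we have $[p(k),p(0)]=0$, and applying this to $1$ gives
\begin{equation}\label{relP}
 \tau\sigma^k(g_0)\,g_k=\tau(g_k)\,g_0 .
\end{equation}
Now I substitute \eqref{relA}, together with its image $\tau(g_k)=\sigma^k\tau(g_0)f_k(s-1)-f_k(s-2)\tau(g_0)$, into \eqref{relP}. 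Writing $P:=\tau\sigma^k(g_0)=\sigma^k\tau(g_0)$ and collecting terms (using that $\tau$ and $\sigma$ commute) turns \eqref{relP} into the single identity
\begin{equation}\label{relstar}
 P\,\sigma^k(g_0)\,f_k(s)+f_k(s-2)\,\tau(g_0)\,g_0=2P\,f_k(s-1)\,g_0
\end{equation}
in $\C[s,d]$.

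\textbf{Reading off the top $d$-degree.} The decisive step is to compare \eqref{relstar} at the highest power of $d$. Let $N=\deg_d(g_0)\ge 0$ and let $\gamma(s)\neq 0$ be the leading $d$-coefficient of $g_0$. Since $\sigma$ and $\tau$ merely shift variables, each of $P,\ \sigma^k(g_0),\ \tau(g_0),\ g_0$ has $d$-degree $N$ with leading coefficient $\gamma(s-1)$ or $\gamma(s)$. Equating the coefficients of $d^{2N}$ on the two sides of \eqref{relstar} and cancelling the nonzero factor $\gamma(s-1)\gamma(s)$ yields $f_k(s)+f_k(s-2)=2f_k(s-1)$, that is, $(\tau-\mathrm{id})^2 f_k=0$. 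If $\deg_s(f_k)\ge 2$, then by Lemma~\ref{deg} the polynomial $(\tau-\mathrm{id})f_k$ is non-constant of degree $\deg_s(f_k)-1$, and a second application of Lemma~\ref{deg} forces $(\tau-\mathrm{id})^2 f_k$ to have degree $\deg_s(f_k)-2\ge 0$, contradicting $(\tau-\mathrm{id})^2 f_k=0$. Hence $\deg_s(f_k)\le 1$.

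\textbf{Main obstacle.} The difficulty is precisely the a priori dependence of $g_0$ on $d$, which blocks a naive $s$-degree count in \eqref{relA} (the two terms can have equal $s$-degree and cancel). Passing to the leading $d$-coefficient in the combined identity \eqref{relstar} is what decouples the $s$-variable and reduces the whole question to the second-difference equation $(\tau-\mathrm{id})^2 f_k=0$, which Lemma~\ref{deg} then resolves at once.
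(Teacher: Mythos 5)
Your proof is correct and takes essentially the same route as the paper's: the same two relations $[s\otimes t^k,p].1=p\otimes t^k.1$ and $[p,p\otimes t^k].1=0$, the same combined polynomial identity, and the same comparison of coefficients at the top $d$-degree $d^{2N}$, yielding the second-difference equation $(\tau-\mathrm{id})^2 f_k=0$ and hence $\deg_s(f_k)\le 1$. Incidentally, your sign is the correct one, $f_k(s)+f_k(s-2)=2f_k(s-1)$; the paper's displayed equation $2f_k(s-1)=f_k(s)-f_k(s-2)$ has a sign typo, and where the paper rules out $f_k=0$ outright via $g_0=[s\otimes t^k,p\otimes t^{-k}].1$, you simply note that case is vacuous, which is equally fine for this statement.
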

\begin{proof}
 We will prove for the case $g_0\neq 0$. Similarly one prove for $h_0\neq 0$.\\
We see that $f_k\neq 0$ for all $k\in \Z$, otherwise $g_0=p.1=[s\otimes t^k,p\otimes t^{-k}].1=0$. Now the first part of the statement follows from Lemma \ref{lem degdf}.\\
 We have $g_k(s,d)=[s\otimes t^k,p].1=g_0(s,d-k)f_k(s)-f_k(s-1)g_0(s,d)$ and putting this value in the equation $0=[p,p\otimes t^k].1$, we get:
    \[0=g_k(s-1,d)g_0(s,d)-g_k(s,d)g_0(s-1,d-k)\]
    \[=\{g_0(s-1,d-k)f_k(s-1)-f_k(s-2)g_0(s-1,d)\}g_0(s,d)-\]
    \[\{g_0(s,d-k)f_k(s)-f_k(s-1)g_0(s,d)\}g_0(s-1,d-k).\]
    This will give us
    \[2f_k(s-1)g_0(s-1,d-k)g_0(s,d)=f_k(s)g_0(s-1,d-k)g_0(s,d-k)+f_k(s-2)g_0(s,d)g_0(s-1,d).\]
    Let $g_0(s,d)=\dis{\sum_{i=0}^m}a_i(s)d^i. $ Now putting the value of $g_0(s,d)$ in the above equation we have,\\
    $2f_k(s-1)\dis{\sum_{i=0}^m}a_i(s-1)(d-k)^i\dis{\sum_{i=0}^m}a_i(s)d^i=f_k(s)\dis{\sum_{i=0}^m}a_i(s-1)(d-k)^i\dis{\sum_{i=0}^m}a_i(s)(d-k)^i $
    $$ +f_k(s-2)\dis{\sum_{i=0}^m}a_i(s-1)d^i\dis{\sum_{i=0}^m}a_i(s-1)d^i.  $$
    
    Now comparing the coefficient of $d^{2m}$ in the above equation we will get
    \[2f_k(s-1)=f_k(s)-f_k(s-2)\]
    and hence $deg_s(f_k)\leq 1$. Similarly, we can prove that if $q.1\neq 0$, then $deg_d(f_k)=0$ and $deg_s(f_k)\leq 1$.

\end{proof}
 Therefore with the help of Lemma 4.4, we assume $f_k=\alpha_k s+\beta_k$, where $\alpha_k, \beta_k\in \C$ for all $k \in \Z$. Note that $\alpha_0=1$ and $\beta_0=0$. We also assume $\alpha_1=\alpha$ and $\beta_1=\beta$.
\begin{lemm}\label{lem d=0}
Let $g_k(s,d),\; h_k(s,d),\; f_k$ be as above. If $g_0$ or $h_0$ is non-zero polynomial, then we have $\alpha_{-1}=\alpha^{-1}$ and
\begin{enumerate}
    \item $deg_d(g_0(s,d))=0$ if $g_0\neq 0,$
    \item $deg_d(h_0(s,d))=0$ if $h_0\neq 0.$
\end{enumerate} .
\end{lemm}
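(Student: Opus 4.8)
The plan is to work entirely inside the polynomial realization $M=\C[s,d]$ and to squeeze out the two conclusions by comparing coefficients of powers of $d$ in the structural identities already assembled in the proof of Lemma \ref{deg f_k}. Recall from there that $f_k=\alpha_k s+\beta_k$, so that $f_k(s)-f_k(s-1)=\alpha_k$ for every $k$, and that the explicit action is $p\otimes t^k.x=\tau\sigma^k(x)g_k$, $s\otimes t^k.x=\sigma^k(x)f_k(s)$ (the latter because $\deg_d(f_k)=0$). Treating the case $g_0\neq 0$ first, write $m=\deg_d(g_0)$ and let $a_m(s)=[d^m]g_0$ be its nonzero leading $d$-coefficient.

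First I would establish multiplicativity of the sequence $\alpha_k$. Applying $[s\otimes t^k,p\otimes t^l]=p\otimes t^{k+l}$ to $1$ gives the recursion
\[
g_{k+l}(s,d)=g_l(s,d-k)f_k(s)-f_k(s-1)g_l(s,d),
\]
of which the identity used in Lemma \ref{deg f_k} is the case $l=0$. Since an integer shift in $d$ does not change the top $d$-coefficient, reading off the coefficient of $d^m$ and using $f_k(s)-f_k(s-1)=\alpha_k$ gives $[d^m]g_k=\alpha_k a_m(s)$, and then $\alpha_{k+l}a_m(s)=\alpha_k\alpha_l a_m(s)$. As $a_m\neq 0$ this forces $\alpha_{k+l}=\alpha_k\alpha_l$. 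Taking $(k,l)=(1,-1)$ yields $1=\alpha_0=\alpha\,\alpha_{-1}$, which simultaneously shows $\alpha\neq 0$ and $\alpha_{-1}=\alpha^{-1}$. If instead $g_0=0$ but $h_0\neq 0$, the same argument run with $q\otimes t^k$ in place of $p\otimes t^k$ again gives $\alpha_{-1}=\alpha^{-1}$, so the first assertion holds whenever $g_0$ or $h_0$ is nonzero.

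For $\deg_d(g_0)=0$ I would return to the product identity
\[
2f_k(s-1)g_0(s-1,d-k)g_0(s,d)=f_k(s)g_0(s-1,d-k)g_0(s,d-k)+f_k(s-2)g_0(s,d)g_0(s-1,d)
\]
from the proof of Lemma \ref{deg f_k}, where comparing the top coefficient $d^{2m}$ already produced $f_k(s)+f_k(s-2)=2f_k(s-1)$. The new input is to compare the next coefficient, $d^{2m-1}$. Writing $b(s)=[d^{m-1}]g_0$ and tracking the shift $(d-k)^m=d^m-mk\,d^{m-1}+\cdots$, every term containing $b$ cancels once one invokes the already-known relation $f_k(s)+f_k(s-2)=2f_k(s-1)$, and what survives is
\[
2mk\,\alpha_k\,a_m(s-1)a_m(s)=0 .
\]
Choosing $k=1$ and using $\alpha_1=\alpha\neq 0$ together with $a_m\neq 0$ forces $m=0$, i.e.\ $\deg_d(g_0)=0$.

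The statement for $h_0$ follows by the symmetric computation: the automorphism $\eta$ (or, equivalently, repeating the argument with $q\otimes t^k$, which shifts $s\mapsto s+1$) turns the recursion into $h_k=f_k(s+1)h_0-h_0(s,d-k)f_k(s)$ and the pair relation $[q\otimes t^k,q\otimes t^l]=0$ into the exact analogue of the master identity, so the same two coefficient comparisons give $\deg_d(h_0)=0$. I expect the delicate point to be the bookkeeping at order $d^{2m-1}$: the leading-order comparison is a tautology, so one must carry the subleading coefficients $b(s)$ and use the previously derived degree-one relation for $f_k$ to make all $b$-terms cancel, leaving the clean multiple of $mk\alpha_k$; arranging that cancellation cleanly is the main technical obstacle.
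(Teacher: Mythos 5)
Your proof is correct; I checked both coefficient extractions. In particular the delicate cancellation you flagged does work out: writing $A=a_m(s-1)a_m(s)$ and $B=a_m(s-1)a_{m-1}(s)+a_{m-1}(s-1)a_m(s)$, the $d^{2m-1}$-coefficient of the master identity reads
\begin{equation*}
2f_k(s-1)\bigl(B-mkA\bigr)=f_k(s)\bigl(B-2mkA\bigr)+f_k(s-2)B,
\end{equation*}
and the relation $f_k(s)+f_k(s-2)=2f_k(s-1)$ kills every $B$-term, leaving $2mk\alpha_k A=0$ exactly as you claim; with $k=1$, $\alpha\neq 0$, $a_m\neq 0$ this forces $m=0$. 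Your route differs from the paper's in both halves, and is leaner. For $\alpha_{-1}=\alpha^{-1}$ the paper works out only the single instance $[s\otimes t^{-1},p\otimes t].1=p.1$ (which is the case $(k,l)=(-1,1)$ of your recursion), after substituting the explicit expression for $g_1$ in terms of $g_0$; your multiplicativity $\alpha_{k+l}=\alpha_k\alpha_l$ is cleaner and yields in addition $\alpha_k=\alpha^k$ for all $k$, a fact the paper only recovers later, inside the proof of Proposition \ref{prop 4.6}. For the degree statement the paper does not reuse the identity from Lemma \ref{deg f_k}: it derives explicit formulas for both $g_1$ and $g_{-1}$ in terms of $g_0$, forms the product identity $g_{-1}(s-1,d-1)g_1(s,d)=g_{-1}(s,d)g_1(s-1,d+1)$ coming from $[p\otimes t,p\otimes t^{-1}].1=0$, and compares $d^{2m-1}$-coefficients there, a substantially longer expansion ending in $4m\alpha\alpha_{-1}a_m(s)a_m(s-1)=0$; you instead extract one more coefficient from the identity already on hand, coming from $[p,p\otimes t^k].1=0$. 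The two commutators encode the same information (vanishing of brackets among loop copies of $p$), so nothing is lost; what your version buys is economy and the stronger intermediate statement $\alpha_k=\alpha^k$, while the paper's version merely keeps the step $\alpha_{-1}=\alpha^{-1}$ strictly prior to, and usable inside, the degree computation. One incidental benefit of your write-up: your (correct) use of $f_k(s)+f_k(s-2)=2f_k(s-1)$ confirms that the displayed relation $2f_k(s-1)=f_k(s)-f_k(s-2)$ in the paper's proof of Lemma \ref{deg f_k} has a sign typo, since only the plus sign gives $\deg_s(f_k)\leq 1$ and the cancellation above.
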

\begin{proof}
 We will prove for $g_0\neq 0$, one proves the other case similarly.
    We assume $g_0(s,d)=\dis{\sum_{i=0}^m}a_i(s)d^i$. We need to prove $m=0$.\\
    From the relation $[s\otimes t, p].1=p\otimes t.1$ we say that
    \begin{equation}\label{eqn val g1}
        g_1(s,d)=(\alpha s+\beta)(g_0(s,d-1)-g_0(s,d))+\alpha g_0(s,d).
    \end{equation}
    Again from the  relation $[s\otimes t^{-1}, p\otimes t].1=p.1$ and the equation (\ref{eqn val g1}) we will have
    \begin{equation}\label{eqn val g0}
    \begin{split}
    (\alpha_{-1}s+\beta_{-1})\{(\alpha s+\beta)(g_0(s,d)-g_0(s,d+1))+\alpha g_0(s,d+1)\}-\\ (\alpha_{-1}(s-1)+\beta_{-1})\{(\alpha s+\beta)(g_0(s,d-1)-g_0(s,d))+\alpha g_0(s,d)\}=g_0(s,d). \end{split}
    \end{equation}
Note that coefficient of $d^m$ in both $g_0(s,d)-g_0(s,d+1)$ and $g_0(s,d-1)-g_0(s,d)$ are zero. Therefore comparing the coefficients of $d^m$ on both sides of the equation (\ref{eqn val g0}), we get
    \[ \alpha (\alpha_{-1}s+ \beta_{-1})a_m(s)-\alpha(\alpha_{-1}(s-1)+\beta_{-1})a_m(s)=a_m(s)\]
    Hence we have $\alpha_{-1}=\alpha^{-1}$.\\
    Now from the relation $[s\otimes t^{-1}, p].1=p\otimes t^{-1}.1$ we have
    \begin{equation}\label{eqn val g-1}
       g_{-1}(s,d)=(\alpha_{-1}s+\beta_{-1})(g_0(s,d+1)-g_0(s,d))+\alpha_{-1}g_0(s,d).
    \end{equation}
    Now from the relation $[p\otimes t,p\otimes t^{-1}].1=0$, we get
    \begin{equation}\label{eqn p}
    g_{-1}(s-1,d-1)g_1(s,d)=g_{-1}(s,d)g_1(s-1,d+1)
\end{equation}
Now we compute the coefficient of $d^{2m-1}$ on both side of equation (\ref{eqn p}) with the help of equation (\ref{eqn val g-1}) and equation (\ref{eqn val g1}). Now\\

$g_{-1}(s-1,d-1)g_1(s,d)$\\

$=\{(\alpha_{-1}(s-1)+\beta_{-1})(g_0(s-1,d)-g_0(s-1,d-1))+\alpha_{-1}g_0(s-1,d-1)\} \times   \{(\alpha s+\beta)(g_0(s,d-1)-g_0(s,d))+\alpha g_0(s,d)\}$\\

$=\{(\alpha_{-1}(s-1)+\beta_{-1})(ma_m(s-1)d^{m-1}+\mcal O(d^{m-2}))+\alpha_{-1}(a_m(s-1)d^m-ma_m(s-1)d^{m-1}+a_{m-1}(s-1)d^{m-1}+\mcal O(d^{m-2})\} \times
\{(\alpha s+\beta)(-ma_m(s)d^{m-1}+ \mcal O(d^{m-2}))+\alpha(a_m(s)d^m+a_{m-1}(s)d^{m-1}+\mcal O(d^{m-2})\}$\\

$=\{\al_{-1}a_m(s-1)d^m+[ma_m(s-1)(\alpha_{-1}(s-1)+\beta_{-1})-\al_{-1}ma_m(s-1)+\al_{-1}a_{m-1}(s-1)]d^{m-1}+\mcal O(d^{m-2})\} \times\{ \alpha a_m(s)d^m+[-(\alpha s+\beta)ma_m(s)+a_{m-1}(s)]d^{m-1} +\mcal O(d^{m-2})  \}$\\

From the above we have coefficient of $d^{2m-1}$ of $g_{-1}(s-1,d-1)g_1(s,d)$ equal to \\

$a_{-1}a_m(s-1)[\al a_{m-1}(s)-m(\al s+\be)a_m(s)] + \al a_m(s)[ma_m(s-1)(\al_{-1}(s-1)+\be_{-1})-\al_{-1}ma_m(s-1)+\al_{-1}a_{m-1}(s-1)].$ \\

In a similar way we have coefficient of $d^{2m-1}$ of $g_{-1}(s,d)g_1(s-1,d+1)$ equal to\\

$\alpha a_m(s-1)[\alpha_{-1}a_{m-1}(s)+m(\alpha_{-1}s+\beta_{-1})a_m(s)]+\alpha_{-1}a_m(s)[m\alpha a_m(s-1)+\alpha a_{m-1}(s-1)-m(\alpha (s-1)+\beta)a_m(s-1)].$\\

Now equating both coefficient of $d^{2m-1}$ we get \\

$\alpha_{-1}a_m(s-1)\{\alpha a_{m-1}(s)-m(\alpha s+\beta)a_m(s)\}+\alpha a_m(s)\{ma_m(s-1)(\alpha_{-1}(s-1)+\beta_{-1})-\alpha_{-1}(m a_m(s-1)-a_{m-1}(s-1)\}$\\
$=\alpha a_m(s-1)\{\alpha_{-1}a_{m-1}(s)+m(\alpha_{-1}s+\beta_{-1})a_m(s)\}+\alpha_{-1}a_m(s)\{m\alpha a_m(s-1)+\alpha a_{m-1}(s-1)-m(\alpha (s-1)+\beta)a_m(s-1)\}.$\\
$\implies 4m\alpha \alpha_{-1} a_m(s)a_m(s-1)=0 \implies m=0$. This completes the proof.

\end{proof}

\begin{prop}\label{prop 4.6}
    Let $M\in \mathcal{M}(\widehat{H_4},\hat{\mathfrak{h}})$, then $p\otimes t^k.1=\alpha ^kp.1$, $q\otimes t^k.1=\alpha ^kq.1,$ $r\otimes t^k.1=\alpha ^kr.1$ for all $k\in \Z$.
\end{prop}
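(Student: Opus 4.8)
The plan is to reduce every assertion to the action on the cyclic generator $1$, by first recording how each basis element of $\what{H_4}$ acts on an arbitrary $x\in M=\C[s,d]$. Since $s=s(0)$ and $d$ act by multiplication, the relations $[s,p\ot t^k]=p\ot t^k$, $[s,q\ot t^k]=-q\ot t^k$, $[s,r\ot t^k]=[s,s\ot t^k]=0$ and $[d,x\ot t^k]=k\,x\ot t^k$ force, by a routine induction on the bidegree in $(s,d)$, the shift formulas
\[ p\ot t^k.x=\tau\sigma^k(x)\,g_k,\qquad q\ot t^k.x=\tau^{-1}\sigma^k(x)\,h_k,\qquad s\ot t^k.x=\sigma^k(x)\,f_k, \]
valid for all $x$, where $g_k=p\ot t^k.1$ and $h_k=q\ot t^k.1$; here the $s$-shift $\tau^{\pm1}$ records the $\mathrm{ad}\,s$-eigenvalue and the $d$-shift $\sigma^k$ records $\mathrm{ad}\,d$. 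By Lemma \ref{deg f_k} we may write $f_k=\al_k s+\be_k$, and by Lemma \ref{lem d=0} the polynomials $g_0,h_0$ lie in $\C[s]$; these two inputs are precisely what makes every $\sigma^k$ act trivially on the objects it meets below.

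Next I would read off $g_k$ and $h_k$. Applying $[s\ot t^k,p]=p\ot t^k$ to $1$ and using $\sigma^k(g_0)=g_0$ gives
\[ g_k=(s\ot t^k).g_0-p.f_k=g_0 f_k-\tau(f_k)\,g_0=\al_k\,g_0, \]
since $f_k-\tau(f_k)=\al_k$; likewise $[s\ot t^k,q]=-q\ot t^k$ applied to $1$ gives $h_k=\al_k h_0$. To determine $\al_k$ I would feed $[s\ot t^k,p\ot t^l]=p\ot t^{k+l}$ into $1$; the same computation gives the left-hand side as $g_l f_k-\tau(f_k)g_l=\al_k g_l=\al_k\al_l g_0$, while the right-hand side is $g_{k+l}=\al_{k+l}g_0$. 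As $g_0\neq0$ (or, if $g_0=0$, the same computation with $q$ and the nonzero $h_0$) this yields the multiplicative relation $\al_{k+l}=\al_k\al_l$; combined with $\al_1=\al$ and $\al_{-1}=\al^{-1}$ from Lemma \ref{lem d=0}, an immediate induction gives $\al_k=\al^k$, hence $g_k=\al^k p.1$ and $h_k=\al^k q.1$.

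It remains to treat $r$. I would apply $[p\ot t^k,q]=r\ot t^k$ to $1$, observing that the cocycle term $k(p,q)\de_{k,0}\mathbf{k}$ vanishes identically (the factor $k$ kills the case $k=0$ and the Kronecker delta kills $k\neq0$), so the action of $\mathbf{k}$ is never needed. Using the shift formulas, $g_k=\al^k g_0$, and $h_0\in\C[s]$ this becomes
\[ r\ot t^k.1=\tau(h_0)g_k-\tau^{-1}(g_k)h_0=\al^k\bigl(h_0(s-1)g_0-g_0(s+1)h_0\bigr)=\al^k\,r.1, \]
the last equality because the parenthesized expression is exactly the $k=0$ instance, namely $r.1$.

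The only genuinely delicate point is the bookkeeping of the two commuting shifts $\tau$ and $\sigma$: the collapses above are legitimate only after invoking the $d$-freeness of $g_0,h_0,f_k$ from Lemmas \ref{deg f_k} and \ref{lem d=0}, which is what trivializes every $\sigma^k$. Once that is in place, the remaining substance is solely the passage from $\al_{k+l}=\al_k\al_l$ to $\al_k=\al^k$, for which the value $\al_{-1}=\al^{-1}$ from Lemma \ref{lem d=0} is indispensable.
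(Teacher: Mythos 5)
Your proof is correct, and it rests on the same inputs as the paper's: the shift formulas dictated by the $\mathrm{ad}\,s$- and $\mathrm{ad}\,d$-eigenvalues, the $d$-degree-zero statements of Lemmas \ref{deg f_k} and \ref{lem d=0}, and the bracket $[p\otimes t^k,q]=r\otimes t^k$ (with the cocycle term always vanishing) to settle $r$. Where you diverge is in how the exponent $\alpha^k$ is produced. The paper fixes $s\otimes t^{\pm1}$ and inducts on $k$ through $p\otimes t^{k+1}=[s\otimes t,\,p\otimes t^k]$, so it only ever uses $\alpha_{\pm1}=\alpha^{\pm1}$ and never needs the coefficients $\alpha_k$ for $|k|\geq 2$; you instead compute $[s\otimes t^k,p].1$ once to get $g_k=\alpha_k g_0$, and then extract the multiplicative law $\alpha_{k+l}=\alpha_k\alpha_l$ from the two-parameter family $[s\otimes t^k,\,p\otimes t^l].1$, which pins down $\alpha_k=\alpha^k$. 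Your route buys something the paper postpones: the identity $\alpha_k=\alpha^k$ is not part of Proposition \ref{prop 4.6} at all, and the paper only derives it later, inside the proof of Theorem \ref{Main theo} (Case I), by exactly your first computation; so your argument packages that step of the theorem together with the proposition, at the cost of invoking an extra family of brackets. Two minor remarks: $\alpha_{-1}=\alpha^{-1}$ is not actually indispensable in your scheme, since $\alpha_1\alpha_{-1}=\alpha_0=1$ already follows from multiplicativity and $f_0=s$; and your fallback to $q$ when $g_0=0$ covers only the case where one of $p.1$, $q.1$ is nonzero --- in the fully degenerate case $p.1=q.1=0$ (where $\alpha$ is not even defined and Lemma \ref{deg f_k} does not apply) all quantities in the statement vanish and the claim is vacuous, which is how the paper's opening line disposes of it.
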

 
\begin{proof}
From $p\otimes t^k=[s\otimes t^k, p]$ we see that if $p.1=0$, then $p\otimes t^k=0$ for all $k\in \Z$. Now we assume $p.1\neq 0$.
    From Lemma \ref{lem d=0}, we know that $\deg_d(g_0)=0$. Now $p\otimes t.1=[s\otimes t,p].1=(\alpha s+\beta)g_0(s)-g_0(s)(\alpha (s-1)+\beta)=\alpha g_0(s)=\alpha p.1$. Also we know, $p\otimes t^{k+1}=[s\otimes t, p\otimes t^k]$. Now use induction on $k$ for positive integer $k$. Similarly, we prove for negative integers with the property $\alpha_1=\alpha^{-1}$.
    One proves for $q\otimes t^k$ similarly. Now $r\otimes t^k.1=[p\otimes t^k,q].1=[p,q\otimes t^k].1=\alpha^k r.1$.
\end{proof}
{\bf Proof of theorem \ref{Main theo} :} 
\begin{proof}
From Lemma \ref{lem d=0} we have, $p.1$ and $q.1$ are in $\C[s]$. This implies that $\C[s]$ is a rank one $U(H_4)$ submodule of $\C[s,d].$ Therefore we get the actions of $p.1, q.1$ and $r.1$ from Theorem \ref{Th H_4}.\\
{\bf Case I: } Let $p.1 \neq 0$ ( similar case will arise if $q.1 \neq 0$). Then we use the relation $[s\ot t^k,p].1=p \ot t^k.1$ to conclude that $\al_k= \al^k.$\\
{\bf Subcase (i): } Let $p.1=a$ and $q.1=b$, $a, b \in \C^*.$ Then $r.1=0$ and,
\begin{align}
  & r.1+K.1 = [p \ot t, q\ot t^{-1}] =0\cr
& \implies K.1 =0 .
\end{align}  

{\bf Subcase (ii): } Let $p.1=a_1s+a_2$ and $q.1=b$, $a_1,b \in \C^*, a_2 \in \C.$ Then $r.1=-a_1b$ and
\begin{align}
 & K.1+r.1=-a_1b\al_1\al_{-1} \cr
& \implies K.1=0.
    \end{align}  
{\bf Subcase (iii): } Let $p.1=g(s)$ and $q.1=0$. Then clearly we have $K.1=0.$  
Therefore we have $M \cong \what M_{(g,0,\al,\bs \be)}$ or $\what M_{(h,b,\al,\bs\be)}$ or $\what M_{(a,b,\al,\bs\be)}$, thanks to Proposition \ref{prop 4.6}.  In a similar way considering $q.1 \neq 0$ we have $M \cong \what M_{(0,g,\al,\bs\be)}$ or $\what M_{(b,h,\al,\bs\be)}$.  \\
{\bf Case II:} Let $p.1= q.1=0.$ Then clearly $M \cong \what M({\mathbf f})$, with the help of Lemma \ref{lem degdf}. This completes the proof.

\end{proof}
\begin{remark}
    Note that when $q.1=0$ and $p.1 \neq 0$ classification of Cartan free modules for $\what {H_4}$ reduces to the problem of classifying Cartan free modules for the affinization of two dimensional non-abelian Lie algebras. Therefore classifiaction of Cartan free modules of rank one for the Lie algebra $\mf L= <s,p> \ot \C[t,t^{-1}] \op \C K \op \C d$ is given by $\what M(\al, \mathbf \be)$ with actions defined in (\ref{actions of mod}). 
\end{remark}

\begin{theo}
\begin{enumerate}
    \item $\what M_{(g,0,\al,\bs\be)} \cong \what M_{(g_1,0,\al_1,\bs\be_1)}$ iff $g=g_1,\al=\al_1, \bs\be=\bs\be_1$
   \item  $\what M_{(0,g,\al,\bs\be)} \cong \what M_{(0,g_1,\al_1,\bs\be_1)}$ iff $g=g_1,\al=\al_1, \bs\be=\bs\be_1$
   \item  $\what M_{(h,b,\al,\bs\be)} \cong \what M_{(h_1,b_1,\al_1,\bs\be_1)}$ iff $h=h_1,\al=\al_1, \bs\be=\bs\be_1, b=b_1$
   \item  $\what M_{(b,h,\al,\bs\be)} \cong \what M_{(b_1,h_1,\al_1,\bs\be_1)}$ iff $h=h_1,\al=\al_1, \bs\be=\bs\be_1, b=b_1$
    \item $\what M_{(a,b,\al,\bs\be)} \cong \what M_{(a_1,b_1,\al_1,\bs\be_1)}$ iff $a=a_1,\al=\al_1, \bs\be=\bs\be_1, b=b_1$
    \item $\what{M}(\mathbf{f})\cong \what{M}(\mathbf{f'})$ iff $f_k(s) = f'_k(s)$ for all $k \in \Z.$
\end{enumerate}  
\begin{proof}
    We prove (3) and all other follows with the simialr proof. Let $\phi:\what M_{(h,b,\al,\bs\be)} \to \what M_{(h_1,b_1,\al_1,\bs\be_1)}$ be an isomorphism. Then we can see that $\phi(1)$ must be a non-zero scalar. Now we use the relation $\phi(x.1)=x.\phi(1)$, for $x=p, q$. This gives us $h=h_1$ and $b=b_1.$ Then using the relation $\phi(x\ot t^k.1)=x\ot t^k.\phi(1)$ for $x=p,s$ and $k \neq 0$ we will get that $\al=\al_1$ and $\bs\be=\bs\be_1.$ This completes the proof.
\end{proof}

\end{theo}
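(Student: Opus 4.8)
The plan is to establish all five equivalences by a single uniform argument, writing out case (3) in detail and noting that the others follow by substituting the appropriate generator actions. In every case the backward implication is trivial: if all parameters agree, the two modules are the \emph{same} module on $\C[s,d]$ and the identity map is an isomorphism. Hence the entire content lies in the forward direction, and there I would first pin down the possible form of $\phi(1)$ for an isomorphism $\phi\colon \what M_{(h,b,\al,\be)}\to \what M_{(h_1,b_1,\al_1,\be_1)}$.

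Since $\phi$ is a $\what{H_4}$-module isomorphism, it commutes with the action of every element of $\widehat{\mathfrak h}=\C s(0)\op \C d$; in both modules $s(0)$ and $d$ act simply as multiplication by $s$ and by $d$ (using $\al^0=1$ and $\be_0=0$). Thus $\phi$ commutes with multiplication by $s$ and by $d$, so it is $\C[s,d]$-linear and satisfies $\phi(x(s,d))=x(s,d)\,\phi(1)$ for all $x(s,d)\in\C[s,d]$. Because $\phi$ is bijective and both modules are free of rank one over $\C[s,d]=U(\widehat{\mathfrak h})$ (this is exactly the defining property of objects of $\mathcal{M}(\what{H_4},\widehat{\mathfrak h})$), the element $\phi(1)$ must generate $\C[s,d]$ as a $\C[s,d]$-module, i.e.\ it is a unit; as the units of $\C[s,d]$ are precisely $\C^*$, we conclude $\phi(1)=c$ for some $c\in\C^*$.

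With $\phi(1)=c$ in hand, evaluating $\phi(y\cdot 1)$ in two ways for $y\in\what{H_4}$ gives $c\,(y\cdot 1)=\phi(y\cdot 1)=y\cdot\phi(1)=c\,(y\cdot 1)$, where the left- and right-hand products are computed in the source and target modules respectively; cancelling $c\neq0$ shows that $y\cdot 1$ is the \emph{same} polynomial in both modules for every $y$. I would then read off the parameters: taking $y=p$ and $y=q$ and using the actions of $M_{(h,b)}$ from Theorem~\ref{Th H_4} forces $h=h_1$ and $b=b_1$, while taking $y=s\ot t^k$ and comparing the coefficient of $s$ in $\al^k s+\be_k$ yields $\al^k=\al_1^k$ for all $k$, hence $\al=\al_1$, and the constant terms then give $\be_k=\be_{1,k}$ for every $k$, i.e.\ $\be=\be_1$. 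Cases (1), (2), (4), (5) are identical, reading the parameters off $p\cdot 1$, $q\cdot 1$ and (where relevant) $r\cdot 1$ for the respective types. The only genuine step is the identification $\phi(1)\in\C^*$; after that everything is bookkeeping. The one subtlety worth flagging is that when $p\cdot 1$ or $q\cdot 1$ vanishes (as in the $q\cdot 1=0$ branch of type $(g,0)$), the scalar $\al$ cannot be recovered from that generator, which is exactly why I would always extract $\al$ from the $s\ot t^k$ action, whose $s$-coefficient equals $\al^k$ regardless of the remaining data.
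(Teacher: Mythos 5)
Your proposal is correct and follows essentially the same route as the paper: show $\phi(1)$ is a nonzero scalar, then compare the actions of generators on $1$ to read off the parameters. The only differences are that you supply the justification for $\phi(1)\in\C^*$ (via $\C[s,d]$-linearity and units of the polynomial ring) which the paper merely asserts, and you extract $\al$ from $s\ot t^k$ rather than $p\ot t^k$ — a minor but sensible variation that also covers the degenerate branches in cases (1) and (2).
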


\begin{theo}\label{Th 4.9}
\begin{enumerate}
    \item 
$\what M_{(h,b,\al,\bs\be)}$, $\what M_{(b,h,\al,\bs\be)}$ and $\what M_{(a,b,\al,\bs\be)}$ are irreducible $\what H_4$ module.
\item $\what H_4$-modules $\what M_{(g,0,\al,\bs\be)}$ and $\what M_{(0,g,\al,\bs\be)}$ are irreducible iff $g$ is  non-zero constant.
\item $\what{M}(\mathbf{f})$ is reducible. Furthermore, if $f_k(s) \neq 0$ for some $k \neq 0$, then there is a chain of submodules:
$$ <1> \supset <s> \supset <s^2> \supset ....   $$
such that $\frac{<s^m>}{<s^{m+1}>}$ is irreducible for all $m \geq 0$, where $<f(s)>$ denote the ideal generated by $f(s)$ in $\C[s,d].$
     \end{enumerate}
\end{theo} 
\begin{proof}
  (1). We prove it for $\what M_{(h,b,\al,\bs\be)}$ (similar proof work for other two modules).  Let $W$ be a non-zero submodule of $\what M_{h,b,\al,\bs\be}$ and $w \in W$ be a non-zero element of $W$ of smallest degree in $d.$ Note that $deg_d(q \ot t.w -f_1q.w) < deg_d(w)$ and $q \ot t.w -f_1q.w$  is a non-zero element of $W$, hence $deg_d(w)=0 $, i.e $w \in M_{h,b}.$ Now observe that $U(H_4)w $ is a non-zero submodule of $M_{h,b}$, so $M_{h,b} \subseteq W.$ Now using the action of $d$ we have $W=M_{h,b}\ot \C[d].$ This completes the proof of (1).\\
  (2) Note that if $g$ is constant the proof of (1) will run through and hence $\what M_{(g,0,\al,\bs\be)}$ is irreducible. On the other hand, if $g$ is non-constant, then corresponding to a factor $s -\al$ of $g$, the ideal generated by $s - \al$ in $\C[s,d]$ forms a proper submodule of $\what M_{(g,0,\al,\bs\be)}$.\\
  To prove (3) observe that, ideal generated by $s^m$ in $\C[s,d]$ forms a proper submodule of $\what{M}(\mathbf{f})$ for all $m \geq 1$. Let $M_m=<s^m>/<s^{m+1}> $  for all $m \geq 0$ and $f_k(s) \neq 0.$ Note that $M_m=span\{ s^m f(d) + <s^{m+1}>: f(d) \in \C[d]\}$. Let $W$ be a non-zero submodule of $M_m.$ Let $x(s,d) +<s^{m+1}> $ be a non-zero element of $W$ of smallest degree in $d$. Then consider $(s\ot t^k-f_k(s)).(x(s,d) +<s^{m+1}>) $, which is a non-zero polynomial and $deg_d((s\ot t^k-f_k(s)).(x(s,d) +<s^{m+1}>)) < deg_d(x(s,d) +<s^{m+1}> ).$ This forces that $W$ contains an elements which is a polynomial in $s$, i.e $s^m +<s^{m+1}>\in W.$ Now considering the action of $d $ on $s^m +<s^{m+1}>$ we have $W=M_m.$ This completes the proof.
  
\end{proof}
\begin{remark}
\begin{enumerate}
    \item[1.] Theorem \ref{Th 3.6} and \ref{Th 4.9} gives us,  Cartan free modules for $\widehat H_4$ induced from $H_4$ is irreducible if and only if corresponding modules for $H_4$ is irreducible. There is a analogous result for irreducibility of Cartan free modules for affine Kac-Moody Lie algebras, Proposition 9, \cite{CTZ1}. 

\item[2.] Unfortunately, we are unable to find the form of all submodules for  $\what M_{(g,0,\al,\bs\be)}$ and $\what M_{(0,g,\al,\be)}$ when $g$ is non-constant. But we have a class of submodules for $\what M_{(g,0,\al,\bs\be)}$ for non-constant $g.$ Let $k \in \N$ and  $s-\al$ be a factor of $g$. Then the ideal of $\C[s,d]$ generated by $\{\dis{\prod_{r=0}^{k}} (s-\al-r) d^k, \dis{\prod_{r=0}^{k-1}}(s-\al-r)d^{k-1}, \cdots , (s-\al) \}  $ forms a submodule of $\what M_{(g,0,\al,\bs\be)}$.  Moreover, it is easy to construct submodules which will arise from the submodules of $H_4$. We believe that all submodules in this case will arise from submodules of $H_4.$
\end{enumerate}
\end{remark}

Now as an application of Theorem \ref{Main theo} we recover the Cartan free modules for Affine-Virasoro Nappi-Witten Lie algebras, which was obtained in \cite{CX}. For this we need the help of Theorem 3.2 of \cite{HCS}. At first we state it here. Let $Vir(0,0)$ be the Lie algebra with basis $\{d_n, W_n,K: n \in \Z\}$ with bracket relations:
$$ [d_n,d_m]=(n-m)d_{m+n}+\delta_{m+n,0}\frac{m^3-m}{12}K$$ 
$$[d_n,W_m]=mW_{m+n}$$
$$[K,.]=0$$
for all $m,n \in \Z.$
\begin{theo}\label{th Vir}
    Let $M$ be a free $U(\C d_0 \op \C W_0)$-module of rank 1 for $Vir(0,0)$. Then $M \cong M(\la,  f)$, where $\la \in \C^*$ and $f$ is a polynomial in $W_0.$ Actions of elements of $Vir(0,0)$ on $U(\C d_0 \op \C W_0) $ are given by
    \begin{align}
        d_m.x(d_0,W_0)=\la^m(d_0+mf(W_0))x(d_0-m,W_0)\\
        W_m.x(d_0,W_0)=\la^mW_0x(d_0-m,W_0)\\
        K.x(d_0,W_0)=0,
    \end{align}
    for all $m \in \Z$, $x(d_0,W_0) \in U(\C d_0 \op \C W_0)$.
\end{theo}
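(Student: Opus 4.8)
The plan is to reprise the shift-operator analysis that yielded Theorems \ref{Th H_4} and \ref{Main theo}, now for the abelian subalgebra $\mathfrak b = \C d_0 \op \C W_0$ of $Vir(0,0)$. First I would identify $M$ with $U(\mathfrak b) = \C[d_0,W_0]$, with $d_0$ and $W_0$ acting by multiplication, and note that $M = U(Vir(0,0))\cdot 1$ is cyclic. The decisive structural point is that $W_0$ commutes with every remaining generator: $[W_0,W_m]=0$ and $[W_0,d_m]=-[d_m,W_0]=0$. Hence each operator $x\mapsto W_m\cdot x$ and $x\mapsto d_m\cdot x$ is $\C[W_0]$-linear, so the entire problem reduces to controlling the dependence on the single variable $d_0$, with $W_0$ carried along as a parameter.

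Next I would use the $d_0$-grading. From $[d_0,W_m]=m W_m$ one gets the operator identity $W_m\cdot(d_0 x)=(d_0-m)\,(W_m\cdot x)$, and the analogous identity holds for $d_m$ (the shift being $d_0-m$ in each case, as dictated by the bracket relations and visible in the target formulas). Exactly as in Lemma \ref{lem degdf}, such an intertwining relation forces $W_m$ and $d_m$ to act as shift operators $W_m\cdot x=\phi_m(d_0,W_0)\,x(d_0-m,W_0)$ and $d_m\cdot x=\psi_m(d_0,W_0)\,x(d_0-m,W_0)$. A degree count in $d_0$ — the argument of Lemmas \ref{lem degdf} and \ref{deg f_k}, applied to the commuting pairs $(W_m,W_{-m})$ and $(d_m,d_{-m})$ — then shows that $\phi_m$ is independent of $d_0$, i.e.\ $\phi_m=\phi_m(W_0)$, and that $\psi_m$ is at most affine-linear in $d_0$.

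Then I would pin down the coefficients by evaluating the defining relations on $1$. Since $W_m\cdot 1=\phi_m(W_0)$ is already $d_0$-free, the mixed relation $[d_{-1},W_1]=W_0$ gives, after a short computation, $\la^{-1}\phi_1(W_0)=W_0$, where $\la\in\C^*$ is the leading coefficient of $\psi_1$; thus $\phi_1(W_0)=\la W_0$ with the same $\la$ that governs $d_{\pm1}$. Feeding this into the recursion $W_{m+1}\sim[d_1,W_m]$ yields $\phi_m(W_0)=\la^m W_0$ for all $m$ (the constant $\la$ is nonzero: if $W_1$ acted as $0$ then $W_0=[d_{-1},W_1]$ would act as $0$, contradicting that $W_0$ acts by nonzero multiplication). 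In parallel, the Virasoro relation $[d_m,d_n]=(n-m)d_{m+n}+\de_{m+n,0}\frac{m^3-m}{12}K$ evaluated on $1$ determines $\psi_m$ recursively from $\psi_{\pm1}$ and forces a single function $f\in\C[W_0]$ to occur in every $\psi_m$, giving $d_m\cdot 1=\la^m(d_0+m f(W_0))$. Finally the central term must vanish: $K$ being central on the cyclic module $M$, it acts as multiplication by the constant $K\cdot 1$, and comparing $d_0$-degrees on the two sides of $[d_m,d_{-m}]\cdot 1$ for $|m|\ge 2$ forces $K\cdot 1=0$. A routine verification that these formulas satisfy all the relations (as in the proposition checking (4.1)--(4.2)) shows the list $M(\la,f)$ is exhaustive.

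The main obstacle is the coupled bookkeeping in the third step: one must show \emph{simultaneously} that $\psi_m$ is affine-linear in $d_0$ with a common $f$, that $\phi_m=\la^m W_0$ with the same base $\la$, and that the Virasoro cocycle annihilates $1$. As in Lemma \ref{lem d=0}, the cleanest route is to extract the top $d_0$-degree coefficients of the relevant relations, reduce each to an identity in $\C[W_0]$, and then induct on $m$ starting from the four generators $d_{\pm1},W_{\pm1}$; keeping track of the lower-order terms in these polynomial identities is the delicate part of the argument.
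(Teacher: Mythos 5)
The first thing you should know is that the paper contains no proof of Theorem \ref{th Vir} to compare yours against: the result is imported verbatim from \cite{HCS} (``For this we need the help of Theorem 3.2 of \cite{HCS}. We state it here.'') and is used as a black box in the final theorem. So your proposal is a reconstruction of a quoted result, and on its own merits its architecture is correct and is exactly the style of the paper's Section 4 analysis of $\what{H_4}$: every operator is determined by its value at $1$ via shift identities, the pair $(W_m,W_{-m})$ kills the $d_0$-dependence of $W_m\cdot 1$ precisely as in Lemma \ref{lem degdf}, and the remaining data is pinned down by recursions obtained by evaluating the defining relations at $1$, with centrality of $W_0$ letting $\C[W_0]$ ride along as scalars. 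One caveat you handled correctly but silently: the paper's displayed brackets for $Vir(0,0)$, read literally, violate the Jacobi identity ($[d_n,d_m]=(n-m)d_{m+n}$ is incompatible with $[d_n,W_m]=mW_{m+n}$); the consistent convention, and the only one under which the theorem's formulas define a module, has $[d_0,d_m]=md_m$, which is what you are using when you take the shift for $d_m$ to be $d_0-m$.

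Two steps are imprecise as written, though both are repairable with relations you already invoke. First, the degree bound on $\psi_m:=d_m\cdot1$: since $[d_m,d_{-m}]$ is not $0$ but $\pm2md_0+cK$, the Lemma \ref{lem degdf}-style count applied to $(d_m,d_{-m})$ only yields $\deg_{d_0}\psi_m+\deg_{d_0}\psi_{-m}=2$, which does not exclude the split $(2,0)$. The clean source of affine-linearity is the mixed relation: once the $\phi_n:=W_n\cdot1$ are known to be $d_0$-free and nonzero, $[d_m,W_n]\cdot1=n\phi_{m+n}$ reads $\phi_n\bigl(\psi_m(d_0)-\psi_m(d_0-n)\bigr)=n\phi_{m+n}$, forcing $\deg_{d_0}\psi_m\le1$ and giving $\phi_nA_m=\phi_{m+n}$ for the leading coefficients $A_m$ of $\psi_m$. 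Relatedly, your step 4 relation $[d_{-1},W_1]=W_0$ actually produces $\phi_1A_{-1}=W_0$ (the leading coefficient of $\psi_{-1}$, not of $\psi_1$); you need $A_1A_{-1}=1$, the $d_0$-coefficient of $[d_1,d_{-1}]\cdot1$, to convert this into $\phi_1=\la W_0$. Second, the bookkeeping for the constant terms $B_m$ (where $\psi_m=A_md_0+B_m$): the Virasoro relations with $m+n\neq0$ alone do \emph{not} force $B_m=m\la^mf$; they are satisfied identically by the larger family $B_m=m\la^mf+(m-1)\la^{m-2}\epsilon$, $\epsilon\in\C[W_0]$. What kills $\epsilon$ is precisely the $m+n=0$ relation at $m=1$, where the cocycle coefficient $\frac{m^3-m}{12}$ vanishes, and only afterwards does the $m=2$ relation force the polynomial $K\cdot1$ to vanish outright (so your unproved assertion that $K$ acts by a constant is never needed). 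Your sketch does include the full Virasoro relation, so nothing fails in principle, but these two points are exactly where the ``delicate bookkeeping'' you allude to lives, and a complete write-up must make them explicit.
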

\begin{theo}
  Let $M$ be a  $U(\C d_0 \op \C s)$ free module of rank one for $\ov{H_4}$. Then they are the classes defined in Theorem 3.2 of \cite{CX}.
\end{theo}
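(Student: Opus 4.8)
The plan is to realise $\ov{H_4}$ as the amalgam of two subalgebras whose rank-one Cartan-free modules are already classified, and to glue the two descriptions along their shared Cartan $\C d_0\op\C s$.

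First I would identify the two subalgebras. Since $(s,s)=0$, the relations $[s\ot t^m, s\ot t^n]=0$ and $[d_m, s\ot t^n]=n\,s\ot t^{m+n}$ show that $\Span\{d_m, s\ot t^m, \mathbf k: m\in\Z\}$ is a copy of $Vir(0,0)$ with $s\ot t^m\leftrightarrow W_m$, $d_m\leftrightarrow d_m$, $\mathbf k\leftrightarrow K$, so that $s=W_0$. Simultaneously, $\Span\{h\ot t^m: h\in H_4,\, m\in\Z\}\cup\{\mathbf k, d_0\}$ is a copy of the affine Nappi-Witten algebra $\what{H_4}$ with derivation $d=d_0$ and Cartan $\what{\mf h}=\C s\op\C d_0$. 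As $M$ is free of rank one over $U(\C d_0\op\C s)$, its restriction to each of these subalgebras is again rank-one Cartan-free, so Theorem \ref{th Vir} and Theorem \ref{Main theo} both apply on the common space $\C[s,d]$ with $d=d_0$.

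Next I would combine the two outputs. Theorem \ref{th Vir} gives $M\cong M(\la, f)$ for some $\la\in\C^*$, $f\in\C[s]$, fixing the actions of $d_m$, $s\ot t^m$ and $\mathbf k$; in particular $\mathbf k=0$ and $s\ot t^m.x=\la^m s\,x(s,d-m)$. Theorem \ref{Main theo} gives $M\cong\tilde M(\al,\be)$ or $M\cong\tilde M(\mathbf f)$, fixing the actions of $p\ot t^m$, $q\ot t^m$, $r\ot t^m$. Because the $s\ot t^m$ action is described by both, equating them pins down the overlap: in the $\tilde M(\al,\be)$ case $x(s,d-m)(\al^m s+\be_m)=\la^m s\,x(s,d-m)$ forces $\al=\la$ and every $\be_m=0$, while in the $\tilde M(\mathbf f)$ case it forces $f_m(s)=\la^m s$, i.e.\ the Nappi-Witten generators act trivially. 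At this stage the only surviving affine data is $\al=\la$ together with an underlying $H_4$-module drawn from Theorem \ref{Th H_4}.

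The decisive step, and the one I expect to be the main obstacle, is the cross-brackets $[d_m, h\ot t^n]=n\,h\ot t^{m+n}$ for $h\in\{p,q,r\}$ and $m\neq0$: these couple Virasoro generators lying outside the affine subalgebra to the Nappi-Witten generators and are invisible to either restriction. Using $p\ot t^n.x=\la^n x(s-1,d-n)(p.1)$ and $d_m.x=\la^m(d+mf(s))x(s,d-m)$, I would compute $[d_m, p\ot t^n].1=\la^{m+n}(p.1)\bigl(n+m(f(s)-f(s-1))\bigr)$ and compare with $n\,(p\ot t^{m+n}).1=n\la^{m+n}(p.1)$; equality for all $m$ forces $f(s)=f(s-1)$, whence $f$ is constant by Lemma \ref{deg} whenever $p.1\neq0$ (and symmetrically when $q.1\neq0$). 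The $r$-relation will hold automatically because $r$ acts without shifting $s$, so the $f$-terms cancel, and in the $\tilde M(\mathbf f)$ case all cross-brackets are trivial, leaving $f$ free. Thus the modules fall into two families: those with trivial Nappi-Witten action, which are exactly the $Vir(0,0)$-modules $M(\la,f)$ with $\la\in\C^*$ and $f\in\C[s]$ arbitrary; and those with nontrivial Nappi-Witten action, where $f$ must be constant, $\be=0$, $\al=\la$, and the $H_4$-datum is one of $M_{(g,0)}, M_{(0,g)}, M_{(h,b)}, M_{(b,h)}, M_{(a,b)}$. Matching these parameters term by term against the list in Theorem 3.2 of \cite{CX} then identifies the two families with its classes and completes the proof.
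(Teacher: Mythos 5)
Your proposal is correct, and it takes a genuinely different route from the paper in the main case. The paper splits into the same two cases; when $p.1=q.1=0$ it, like you, simply quotes Theorem \ref{th Vir}. But when $p.1\neq 0$ (or $q.1\neq 0$) the paper never invokes Theorem \ref{th Vir}: it reconstructs the Virasoro action by hand, showing from $[d_n,p].1=0$ that $d_n.1$ depends only on $d$, from $[d_n,p\ot t].1=p\ot t^{n+1}.1$ that $d_n.1=\al_1^nd+\mu_n$, from $[d_n,s\ot t^k].1=ks\ot t^{k+n}.1$ that $\be_k=0$, and then determining the constants by a two-step induction on the brackets $[d_m,d_n]$, arriving at $\mu_n=n\al_1^n\la$ for a constant $\la$ (in your notation the paper's $\al_1$ is your $\la=\al$, and the paper's $\la$ is the constant value of your $f$). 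Your key observation --- that the restriction of $M$ to $\Span\{d_m,\,s\ot t^m,\,\mathbf k\}\cong Vir(0,0)$ is itself rank-one Cartan-free, so Theorem \ref{th Vir} applies in \emph{every} case --- short-circuits that entire computation: equating the two descriptions of the operator $s\ot t^m$ gives $\al=\la$ and $\be_m=0$ at once, and your single cross-bracket $[d_m,p\ot t^n]=n\,p\ot t^{m+n}$ forces $f(s)=f(s-1)$, hence $f$ constant, which is the paper's $\mu_n=n\al_1^n\la$ in another guise; your formula for that bracket, the symmetric $q$-case, and the automatic cancellation for $r$ are all correct. What each approach buys: yours is shorter, uniform across cases, and reuses the existing classifications; the paper's is self-contained in its Case I and yields the explicit form of $d_n.1$ directly. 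One point you should make explicit when writing this up: the actions supplied by Theorems \ref{th Vir} and \ref{Main theo} hold on the nose, not merely up to isomorphism, because any isomorphism between rank-one free $U(\C d_0\op\C s)$-modules is multiplication by a nonzero scalar; this is what licenses equating the two formulas for $s\ot t^m$ coefficientwise.
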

\begin{proof}
   With the help of Theorem \ref{Main theo}, to complete this proof
we need to verify the actions of $d_m$ and $s \ot t^m$ are identical with the actions of Theorem 3.2 of \cite{CX}. \\
{\bf Case I:} Let $p.1 =g(s)\neq 0$ (similar method works if $q.1 \neq 0$).\\  Note that we have $d_n.f(s,d)=f(s,d-n)d_n.1$. Let $d_n.1=f_n(s,d)$. Now consider $[d_n,p].1=0,$ which implies that $g(s)f_n(s,d)=g(s)f_n(s-1,d).$ Since $g(s) \neq 0$, we have $f_n(s,d)=f_n(s-1,d)$, this forces that $deg_s(f_n(s,d))=0.$ Let $d_n.1=f_n(d).$ For all $n \in \Z \setminus\{0\}$ consider the relation,
     \begin{align*}
         \al_1^{n+1}g(s)=p\ot t^{n+1}.1& =[d_n,p\ot t].1 \cr
         &=d_n(\al_1g(s))-p \ot t.(f_n(d)) \cr
         &=\al_1g(s)f_n(d)-\al_1f_n(d-1)g(s) \cr
         &=\al_1g(s)(f_n(d)-f_n(d-1)).
         \end{align*}
The above relation implies that $f_n(d)-f_n(d-1)=\al_1^n$ and hence $f_n(d)$ is a one degree polynomial. Therefore we have  $f_n(d)=\al_1^nd+\mu_n$ for some constant $\mu_n,$ for all $n \in \Z.$ Note that $\mu_0=0$. We know that $s\ot t^k.1=\al_1^ks+\be_k$ for all $k \in \Z$ and $\be_0=0.$\\
{\bf Claim :} $\be_k=0$ for all $k \in \Z.$ Consider the relation 
\begin{align*}
    ks\ot t^{k+n}.1 &=[d_n,s \ot t^k].1 \cr
    &=k\al_1^n(\al_1^ks+\be_k).
\end{align*}
Now comparing both side we have, $\be_{k+n}=\al_1^n\be_k$ for all $k \neq 0$, for all $n \in \Z$. Now putting $n=-k$ and using the fact $\be_0=0$ we have $\be_k=0$ for all $k \in \Z$.

Now we consider the relation $[(n-m)d_{m+n}.1=[d_m,d_n].1$, this implies that 
\begin{align}\label{vireq4.9}
    (n-m)d_{m+n}.1=(\al_1^n(d-m)+\mu_n)(\al_1^{m}d+\mu_{m})-(\al_1^{m}(d-n)+\mu_{m})(\al_1^nd+\mu_n)
\end{align}
Putting $m=-n \neq 0,$ in equation (\ref{vireq4.9}) and equating constant term we have, $\mu_n=-\al_1^{2n}\mu_{-n}$, for all $n \neq 0.$\\
{\bf Claim: } $\mu_n=(n-1)\al_1^{n-2}\mu_2-(n-2)\al_1^{n-1}\mu_1,$ for all $n > 2.$ It is easy to verify from the equation (\ref{vireq4.9}) with $m=1,n=2$ that $\mu_3$ satisfy the property of the claim. Assume the claim is true for $n =k.$ 
Now comparing the constant terms in equation (\ref{vireq4.9}) with $m=1,n=k$, we have, 

\begin{align*}
         (k-1)\mu_{k+1}& =-\al_1^k\mu_1+ k\al_1\mu_k\cr
         &=-\al_1^k\mu_1+ k\al_1[(k-1)\al_1^{k-2}\mu_2-(k-2)\al_1^{k-1}\mu_1] \cr
         &=k(k-1)\al_1^{k-1}\mu_2-(k-1)^2\al_1^k\mu_1 .   \cr
         \end{align*}
Therefore the claim is proved by induction principal. Now putting $m=-1, n=3$ in equation (\ref{vireq4.9}) and comparing the constant terms we have, 
\begin{align*}
         4\mu_{2}& =\mu_{-1}(\mu_3+\al_1^3)-\mu_3(\mu_{-1}-3\al_1^{-1})\cr
         &=\mu_{-1}\al_1^3 +3\mu_3 \al_1^{-1}\cr
         &= -\al_1\mu_1+3\al_1^{-1}(2\al_1\mu_2-\al_1^{2}\mu_1).\cr
         \end{align*}
From the above equation implies that $\mu_2=2\al_1\mu_1.$ Hence we have $\mu_n=n \al_1^{n-1}\mu_1=n\al_1^n\la$, where $\mu_1=\al_1\la,$ for some $\la \in \C$. Then we get $\mu_{-n}=-n\al_1^{-n}\la$ for all $n <0$. Hence for all $n \in \Z$ we have $\mu_n=n\al_1^n\la,$ for some $\la \in \C.$ This proves that actions of $d_m$ and $s \ot t^m$ are identical with the actions of \cite{CX} Theorem 3.2. This completes the proof for case I.\\
{\bf Case II:} Let $p.1=q.1=0.$ In this case $M$ will be Cartan free module for the Lie algebra $span\{ s\ot t^k,d_k, K: k \in \Z\} \cong Vir(0,0).$ Now the result follows from Theorem  \ref{th Vir}. Hence completes the proof.

\end{proof}

{\bf Acknowledgments:} 
For this project the First author is supported by funds of the National Natural Science Foundation of China (Grant No. 12071136) and Science and Technology Commission of Shanghai Municipality (Grant No. 22DZ2229014).

\end{document}